\def\RR{{\mathbb R}}
\def\HH{{\mathbb H}}
\def\SSphere{{\mathbb S}}
\newcounter{marnote}
\begin{document}
\newtheorem{THM}{Theorem}
\renewcommand*{\theTHM}{\Alph{THM}}
\newtheorem{Def}{Definition}[section]
\newtheorem{thm}[Def]{Theorem}
\newtheorem{lem}[Def]{Lemma}
\newtheorem{question}[Def]{Question}
\newtheorem{prop}[Def]{Proposition}
\newtheorem{cor}[Def]{Corollary}
\newtheorem{clm}[Def]{Claim}
\newtheorem{step}[Def]{Step}
\newtheorem{sbsn}[Def]{Subsection}
\newtheorem*{conj}{Conjecture}
\theoremstyle{remark}
\newtheorem{rem}[Def]{Remark}

\numberwithin{equation}{section}


\title
{Existence and uniqueness to a fully non-linear version of the Loewner-Nirenberg problem}

\author{Maria del Mar Gonz\'alez \footnote{Universidad Aut\'onoma de Madrid,
Departamento de Matem\'aticas, Campus de Cantoblanco, 28049 Madrid, Spain. Email: mariamar.gonzalezn@uam.es}~ , YanYan Li \footnote{Department of Mathematics, Rutgers University, Hill Center, Busch Campus, 110 Frelinghuysen Road, Piscataway, NJ 08854, USA. Email: yyli@math.rutgers.edu.}~ and Luc Nguyen \footnote{Mathematical Institute and St Edmund Hall, University of Oxford, Andrew Wiles Building, Radcliffe Observatory Quarter, Woodstock Road, Oxford OX2 6GG, UK. Email: luc.nguyen@maths.ox.ac.uk.}}

\date{}
\maketitle

\begin{abstract}
We consider the problem of finding on a given Euclidean domain $\Omega$ of dimension $n \geq 3$ a complete conformally flat metric whose Schouten curvature $A$ satisfies some equation of the form $f(\lambda(-A)) = 1$. This generalizes a problem considered by Loewner and Nirenberg for the scalar curvature. We prove the existence and uniqueness of such metric when the boundary $\partial\Omega$ is a smooth bounded hypersurface (of codimension one). When $\partial\Omega$ contains a compact smooth submanifold $\Sigma$ of higher codimension with $\partial\Omega\setminus\Sigma$ being compact, we also give a `sharp' condition for the divergence to infinity of the conformal factor near $\Sigma$ in terms of the codimension.
\end{abstract}

\tableofcontents

\section{Introduction}

Assume $n \geq 3$. For a positive $C^2$ function $u$ defined on an open subset of $\RR^n$, define its conformal Hessian to be
\begin{equation}\label{conformal-Hessian}
A^u = -\frac{2}{n-2} u^{-\frac{n+2}{n-2}}\,\nabla^2 u + \frac{2n}{(n-2)^2} u^{-\frac{2n}{n-2}} \nabla u \otimes \nabla u - \frac{2}{(n-2)^2}\,u^{-\frac{2n}{n-2}}\,|\nabla u|^2\,I.
\end{equation}
In differential geometry, $A^u$ is  the Schouten curvature tensor of the metric  $u^{\frac{4}{n-2}}g_{\rm flat}$ where $g_{\rm flat}$ is the Euclidean metric. We will write $\lambda(-A^u)$ to denote the eigenvalues of $-A^u$.

Let
\begin{equation}
\Gamma\subset \RR^n\ \mbox{be an open symmetric cone with vertex at the origin}
\label{01}
\end{equation}
satisfying
\begin{equation}
\Gamma \supset \Gamma + \Gamma_n = \{\lambda + \mu: \lambda \in \Gamma, \mu \in \Gamma_n\}
\label{02}
\end{equation}
where $
\Gamma_n 	:=\{\mu\in \RR^n\ |\
\mu_i>0\ \forall\ i\}$.
Assume that
\begin{equation}
f\in C^0(\overline \Gamma)\ \mbox{is
symmetric in
} \ \lambda_i,
\label{03}
\end{equation}
\begin{equation}
f>0 \text{ in } \Gamma,
f=0\ \mbox{on}\ \partial \Gamma,
\text{ and } f(\lambda + \mu) \geq f(\lambda) \text{ for all }\lambda \in \Gamma, \mu \in \Gamma_n,
\label{04}
\end{equation}
\begin{equation}
f \text{ is homogeneous of some positive degree}.
	\label{f-homo}
\end{equation}
To keep the notation simple, we use the convention that whenever we write $f(\lambda)$, we assume $\lambda \in \bar\Gamma$. Note that we do not require that $f$ nor $\Gamma$ be convex.

Let $\Omega \subset \RR^n$ be a (bounded or unbounded) domain in $\RR^n$, i.e. $\Omega$ is an open and connected subset of $\RR^n$. Consider the equation
\begin{equation}
f(\lambda(-A^u)) = 1, \qquad u  > 0 \qquad  \text{ in } \Omega
	\label{Eq:X1}
\end{equation}
subject to the boundary condition
\begin{equation}
u(x) \rightarrow +\infty  \text{ as } \textrm{dist}(x,\partial\Omega) \rightarrow 0.
	\label{Eq:X1BC}
\end{equation}

A similar problem can be posed when $\Omega$ is a subset of a given Riemannian manifold, but this will not be pursued in the present paper.

The typical example is when $f$ is built from the $\ell$-th elementary symmetric function of the eigenvalues $\lambda=(\lambda_1,\ldots,\lambda_n)$, i.e.,
$$\sigma_\ell(\lambda)=\sum_{i_1<\ldots<i_\ell} \lambda_{i_1}\ldots\lambda_{i_\ell}, \quad\text{for} \quad\ell=1,\ldots,n.$$
The corresponding cone $\Gamma$ is the cone $\Gamma_\ell=\{\lambda=(\lambda_1,\ldots,\lambda_n):\sigma_1(\lambda)>0,\ldots,\sigma_\ell(\lambda)>0\}$. In this setting, \eqref{Eq:X1} is a fully non-linear (non-uniformly) elliptic equation of Hessian type, which is usually referred to as the $\sigma_\ell$-Yamabe problem in the `negative case'. Although its counterpart in the `positive case', i.e. the equation $f(\lambda(A^u)) = 1$, has been intensively studied after the works of Viaclovsky \cite{Viac00-Duke, Viac00-AMS} and Chang, Gursky and Yang \cite{CGY02-AnnM,CGY03-IHES,CGY03-IP} (see e.g. Ge and Wang \cite{GeWang06}, Guan and Wang \cite{GW03-IMRN}, Gursky and Viaclovsky \cite{GV07}, Li and Li \cite{LiLi03,LiLi05}, Li \cite{Li09-CPAM},  Li and Nguyen \cite{LiNgPoorMan}, Sheng, Trudinger and Wang \cite{STW07}, Trudinger and Wang \cite{TW09} and the references therein), the equation \eqref{Eq:X1} has received much less attention. We recall, for instance, Gurksy and Viaclovsky \cite{Gursky-Viaclovsky:negative-curvature}  where the negative $\sigma_k$ problem for a modified Schouten tensor was solved (compare Li and Sheng \cite{Li-Sheng:flow} where a flow method is used instead),  Guan \cite{Guan:negative-Ricci}, Gursky, Streets and Warren \cite{Gursky-Streets-Warren} and Sui \cite{Sui} for the analogue for the Ricci tensor. Note that the equations considered in \cite{Guan:negative-Ricci, Gursky-Streets-Warren, Gursky-Viaclovsky:negative-curvature, Li-Sheng:flow, Sui} can be recast in the form \eqref{Eq:X1} for a suitable $f$.\\

In the particular case that $\ell=1$, $\sigma_1$ is the scalar curvature times a positive constant, and  \eqref{Eq:X1} reduces to the Yamabe equation
\begin{equation}\label{Eq:Yamabe}
\Delta u=u^{\frac{n+2}{n-2}}, \qquad u  > 0 \qquad  \text{ in } \Omega.
\end{equation}
The problem \eqref{Eq:Yamabe}-\eqref{Eq:X1BC} is the so-called Loewner-Nirenberg problem (or, in the manifold setting, singular Yamabe problem) for negative curvature. The classical paper of Loewner and Nirenberg \cite{LoewnerNirenberg} shows that a solution exists if the  boundary of $\Omega$ is smooth and compact.

The next step is, for a fixed subset $\Sigma$ of $\partial\Omega$, to study equation \eqref{Eq:Yamabe}
under the new boundary condition
\begin{equation}
u(x) \rightarrow +\infty  \text{ as } \textrm{dist}(x,\Sigma) \rightarrow 0,
	\label{new-BC}
\end{equation}
If $\Sigma$ is a smooth compact submanifold of $\RR^n$ of dimension $d$ such that $\partial\Omega\setminus\Sigma$ is compact, \cite{LoewnerNirenberg} shows that, loosely speaking,  \eqref{new-BC} is not satisfied for $d < \frac{n-2}{2}$ and is satisfied for $d > \frac{n-2}{2}$. It was conjectured in \cite{LoewnerNirenberg} that \eqref{new-BC} is not satisfied in the equality case $d = \frac{n-2}{2}$, which was later confirmed in Aviles \cite{Aviles82-CPDE} and V\'eron \cite{Veron81-JDE}. The manifold version was considered in  Andersson, Chru\'sciel and Friedrich \cite{Andersson-Chrusciel-Friedrich}, Aviles and McOwen \cite{Aviles-McOwen88}, Mazzeo \cite{Mazzeo:singular-Yamabe}; in particular, a very detailed asymptotic expansion near $\Sigma$ was established in these works. A necessary and sufficient condition for the satisfaction of \eqref{new-BC} in terms of (non-linear) capacities when $\Sigma$ needs not be smooth was given in  \cite{Labutin:Yamabe}  based on his previous work \cite{Labutin:Wiener}.

(All these results in the negative case are in  contrast with the positive one, where the seminal work of Schoen and Yau \cite{S-Y} proves that a complete, conformally flat metric with constant positive scalar curvature must have singular set $\Sigma$ of Hausdorff dimension no larger than $\frac{n-2}{2}$, and this is sharp at least for smooth $\Sigma$, cf. Mazzeo and Pacard \cite{Mazzeo-Pacard:construction}.)

The present paper generalizes these results to the fully non-linear setting. We first consider the existence of a viscosity solution (see Definition \ref{Def:ViscositySolution} below)
for problem \eqref{Eq:X1}-\eqref{Eq:X1BC}, in the case that $\partial\Omega$ is smooth:

\begin{thm}\label{Thm:SmoothDom}
Let $n \geq 3$ and $(f,\Gamma)$ satisfy \eqref{02}-\eqref{f-homo}. Let $\Omega \subset \RR^n$ be a bounded domain with smooth boundary $\partial \Omega$. Then the problem \eqref{Eq:X1}-\eqref{Eq:X1BC} has a unique continuous viscosity solution. Moreover the solution is locally Lipschitz in $\Omega$ and
\[
\lim_{\textrm{dist}(x,\partial\Omega) \rightarrow 0} \textrm{dist}(x,\partial\Omega)^{\frac{n-2}{2}} u(x) = C(f,\Gamma) \in (0,\infty).
\]
\end{thm}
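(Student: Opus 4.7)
My plan is to adapt the Perron-style construction of \cite{LoewnerNirenberg} to the fully non-linear, possibly degenerate setting: solve a monotone sequence of Dirichlet problems with finite boundary data $u_k = k$, derive uniform two-sided barriers via an explicit half-space model, pass to a locally uniform limit, and identify the sharp boundary asymptotic by a blow-up.

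\textbf{Step 1: half-space model and the constant.} A direct computation from \eqref{conformal-Hessian} shows that on the half-space $\{x_n > 0\}$ the function $u_c(x) = c\,x_n^{-(n-2)/2}$ satisfies $-A^{u_c} = \tfrac12 c^{-4/(n-2)} I$. By the positive-degree homogeneity \eqref{f-homo} and the positivity in \eqref{04}, the equation $f(\lambda(-A^{u_c})) = 1$ has a unique positive solution $c = C(f,\Gamma)$. Composing $u_c$ with conformal inversions produces explicit radial solutions on balls $B_r(y)$ and on ball complements $\RR^n \setminus \overline{B_\rho(z)}$, each blowing up at the boundary with the same leading coefficient $C(f,\Gamma)$; these serve as the fundamental barriers.

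\textbf{Step 2: approximating problems, upper barrier, and passage to the limit.} For each $k \geq 1$, one solves $f(\lambda(-A^{u_k})) = 1$ in $\Omega$ with $u_k = k$ on $\partial\Omega$ by Perron's method: small positive constants are subsolutions since $f(0) = 0$, and rescaled ball solutions from Step 1 furnish supersolutions dominating $k$ on $\partial\Omega$. A comparison principle for \eqref{Eq:X1} under \eqref{01}--\eqref{f-homo} delivers a continuous viscosity solution $u_k$ and monotonicity of $\{u_k\}$ in $k$. The uniform interior ball condition for smooth $\partial\Omega$ of some radius $r_0 > 0$ allows one to slide the radial ball solution against $u_k$ from inside, yielding
\[
u_k(x) \leq \big(C(f,\Gamma) + o(1)\big)\,\textrm{dist}(x,\partial\Omega)^{-\frac{n-2}{2}} \quad \text{as } \textrm{dist}(x,\partial\Omega) \to 0,
\]
uniformly in $k$. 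Monotonicity plus this upper bound give the locally uniform limit $u := \lim_k u_k$, and stability of viscosity solutions under locally uniform convergence makes $u$ a viscosity solution of \eqref{Eq:X1} satisfying $\limsup_{x \to \partial\Omega} \textrm{dist}(x,\partial\Omega)^{(n-2)/2} u(x) \leq C(f,\Gamma)$.

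\textbf{Step 3: sharp lower bound, regularity, and uniqueness.} For the matching lower bound, I would perform a blow-up at a boundary point $x_0$: $u^r(y) := r^{(n-2)/2} u(x_0 + r y)$ solves the same equation by scaling invariance on a domain that, as $r \to 0$, converges to a half-space, is locally bounded above by Step 2, and retains boundary blow-up. By compactness and a half-space Liouville-type rigidity under \eqref{01}--\eqref{f-homo}, $u^r \to u_{C(f,\Gamma)}$, which gives $\liminf_{x \to \partial\Omega} \textrm{dist}(x,\partial\Omega)^{(n-2)/2} u(x) \geq C(f,\Gamma)$ and hence the stated limit. Local Lipschitz regularity then follows from a rescaling argument using the two-sided bounds. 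For uniqueness, any other continuous viscosity solution $v$ satisfies the same sharp asymptotic by the same argument, so for each $\varepsilon > 0$ the function $(1+\varepsilon) u$ is a strict supersolution by \eqref{f-homo} dominating $v$ in a one-sided neighborhood of $\partial\Omega$; the comparison principle propagates $v \leq (1+\varepsilon) u$ to all of $\Omega$, and $\varepsilon \to 0$ together with the symmetric argument yields $u = v$. The principal technical obstacle is establishing the comparison principle and the half-space Liouville rigidity in the full generality of \eqref{01}--\eqref{f-homo}, where $f$ is only continuous and $f$, $\Gamma$ need not be convex; once these are in hand, the rest parallels the semi-linear argument of \cite{LoewnerNirenberg}, with \eqref{f-homo} and the comparison principle replacing maximum principles and linear theory.
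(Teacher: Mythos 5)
Your Steps 1--2 follow the paper closely: the canonical solutions on balls (and their exteriors) with a uniquely normalized coefficient $\alpha = C(f,\Gamma)$, Perron's method for the Dirichlet problems with finite boundary data $c$, monotonicity via the comparison principle, and the upper barrier from the interior sphere condition are all exactly what the paper does. The comparison principle, which you correctly flag as essential, is indeed the main structural input and is quoted from Li--Nguyen--Wang.

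The genuine gap is in Step 3. To obtain the sharp \emph{lower} bound you propose a boundary blow-up together with a ``half-space Liouville-type rigidity'' for equation \eqref{Eq:X1} in the full generality of \eqref{01}--\eqref{f-homo}, and you acknowledge that this rigidity is a ``principal technical obstacle'' without resolving it. The paper never needs such a theorem: it obtains the sharp lower bound directly by sliding the \emph{exterior} canonical solution $u^{(out)}_{R,\,\xi+R\nu(\xi)}$ along $\partial\Omega$ (Lemma \ref{Lem:BBehavior}), which, together with the interior ball giving the matching upper bound, produces $\lim \textrm{dist}(x,\partial\Omega)^{(n-2)/2}u(x) = \alpha 2^{-(n-2)/2}$ by an elementary two-sided barrier computation. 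This also plugs a second hole in your write-up: before any blow-up argument can work you need a preliminary positive lower barrier (otherwise the rescaled functions could degenerate to zero), and your Step 2 contains only the upper barrier; the paper's Lemma \ref{Lem:LowBnd} supplies precisely this via exterior tangent balls. Finally, your treatment of local Lipschitz regularity (``a rescaling argument using the two-sided bounds'') is too vague in this degenerate fully nonlinear setting; the paper's proof relies on a Kelvin transform / moving-sphere argument (Theorem \ref{thm:Lip}, following Li--Nguyen--Wang), not on $C^0$ bounds alone. In short, replace your blow-up/Liouville step by a direct comparison with exterior ball solutions, and invoke the Kelvin-transform Lipschitz estimate, and your outline matches the paper's proof.
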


Following \cite{LoewnerNirenberg}, we would now like define a candidate \emph{maximal} solution $u_\Omega$ to \eqref{Eq:X1} for an arbitrary domain $\Omega$ in $\RR^n$. To this end, select an increasing sequence $\Omega_1 \Subset \Omega_2 \Subset \ldots $ of bounded subdomains of $\Omega$ with smooth boundaries $\partial\Omega_j$ which are hypersurfaces such that $\Omega = \cup \Omega_j$. By the above theorem, there exists a unique solution $u_j$ for every $j$. The function $u_\Omega$, constructed as the limit
$$u_\Omega := \lim_{j \rightarrow \infty} u_j,$$
is well defined, non-negative and belongs to $C^{0,1}_{loc}(\Omega)$. Furthermore, $u_\Omega$ is monotone in $\Omega$: If $\Omega \subset \tilde\Omega$, then $u_\Omega \geq u_{\tilde\Omega}$ in $\Omega$. See Section \ref{section:general-domains}. 

One has the dichotomy that either $u_\Omega > 0$ in $\Omega$ or $u_\Omega \equiv 0$ in $\Omega$; see Lemma \ref{Lem:16IV18-Dic}. If $\RR^n \setminus \bar\Omega \neq \emptyset$, then $u_\Omega > 0$ in $\Omega$. When $\RR^n \setminus \bar\Omega = \emptyset$, both situations can occur.
For example, when $\Omega = \RR^n \setminus \{0\}$ and $\Gamma \subset \Gamma_1$, then $u_\Omega \equiv 0$. \footnote{Indeed, by Corollary \ref{cor-last} below, $u_\Omega$ is bounded on $\RR^n$ and goes to zero at infinity. Since $\Gamma \subset \Gamma_1$, $u_\Omega$ is sub-harmonic in $\RR^n \setminus \{0\}$ and thus in $\RR^n$ as $u_\Omega$ is bounded from below. This implies by the maximum principle that $u_\Omega \le 0$ and so $u_\Omega \equiv 0$.} On the other hand, when $\Omega = \RR^n \setminus \SSphere^{n-k}$ and when the vector $v_k$ defined in \eqref{model} below belongs to $\Gamma$, $u_\Omega$ is positive  in $\Omega$ and gives rise to the standard metric on $\HH^{n-k+1} \times \SSphere^{k-1}$.

 It is not hard to see that if $u_\Omega \not\equiv 0$ then $u_\Omega$ is indeed the maximal positive solution of \eqref{Eq:X1} in $\Omega$.

It remains to understand the location where $u_\Omega$ diverges to infinity. 
For this, we introduce, as in \cite{LoewnerNirenberg}, the following notion:

\begin{Def}\label{def:regular}
A compact subset $\Sigma$ of $\partial\Omega$ is called \emph{regular} (for $(f,\Gamma)$) if $u_\Omega(x) \rightarrow +\infty$ as $x \rightarrow \Sigma$.
\end{Def}

Our next theorem is a direct analogue of \cite[Theorem 5]{LoewnerNirenberg}:

\begin{thm}\label{Thm:05IV18-T1}
Let $\Omega$ be a domain of $\RR^n$, $n \geq 3$, and $(f,\Gamma)$ satisfy \eqref{02}-\eqref{f-homo}. Let $\Sigma$ be a compact subset of $\partial\Omega$ such that $\partial\Omega \setminus \Sigma$ is also compact. Suppose that $u_\Omega \not\equiv 0$. Then $\Sigma$ is regular if and only if there are an open neighborhood $U$ of $\Sigma$ and a function $\psi \in C^2(U \cap \Omega)$ such that
\[
f(\lambda(-A^\psi)) \geq 1 \text{ in } U \cap \Omega \text{ and } \psi(x) \rightarrow + \infty \text{ as } x \rightarrow \Sigma.
\]
\end{thm}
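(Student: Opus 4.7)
\emph{Plan.} The statement is an equivalence, so I treat the two implications separately.

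\emph{``Only if'' direction} (regularity of $\Sigma$ implies existence of $\psi$): I would simply take $\psi := u_\Omega$ restricted to any open neighborhood $U$ of $\Sigma$. By the regularity assumption, $u_\Omega \to +\infty$ at $\Sigma$, and as a solution of the equation, $u_\Omega$ satisfies $f(\lambda(-A^{u_\Omega})) = 1 \geq 1$. The only point to verify is the $C^2$ regularity. Although Theorem~\ref{Thm:SmoothDom} only asserts $u_\Omega \in C^{0,1}_{\rm loc}(\Omega)$, the classical solutions $u_{\Omega_j}$ are smooth in $\Omega_j$, and standard interior estimates for $f(\lambda(-A^u)) = 1$ that are uniform in $j$ should transfer to the decreasing limit $u_\Omega$, giving $u_\Omega \in C^2_{\rm loc}(\Omega)$.

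\emph{``If'' direction} (existence of $\psi$ implies regularity of $\Sigma$): The plan is to compare a rescaled $c\psi$ with the approximating solutions $u_{\Omega_j}$ on a suitable neighborhood of $\Sigma$. First I exploit the homogeneity of $f$: a direct computation on \eqref{conformal-Hessian} shows that each of the three terms of $A^u$ is multiplied by $c^{-4/(n-2)}$ under $u \mapsto cu$, so
\[
A^{c\psi} = c^{-4/(n-2)} A^\psi, \qquad f(\lambda(-A^{c\psi})) = c^{-4\alpha/(n-2)} f(\lambda(-A^\psi)) \geq 1 \quad \text{for } c \in (0, 1],
\]
where $\alpha > 0$ is the degree of $f$. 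Thus $c\psi$ remains a $C^2$ subsolution blowing up at $\Sigma$ for every $c \in (0,1]$.

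Since $\Sigma$ and $\partial\Omega \setminus \Sigma$ are disjoint compact subsets of $\partial\Omega$, I may choose an open $\tilde U$ with $\Sigma \subset \tilde U \Subset U$ and $\overline{\tilde U} \cap (\partial\Omega \setminus \Sigma) = \emptyset$. Then $K := \partial\tilde U \cap \bar\Omega$ is a compact subset of $\Omega$, and since the assumption $u_\Omega \not\equiv 0$ combined with Lemma~\ref{Lem:16IV18-Dic} gives $u_\Omega > 0$ in $\Omega$, the function $u_\Omega$ attains a positive minimum on $K$. I pick $c \in (0,1]$ small enough that $c\psi \leq u_\Omega$ on $K$. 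For $j$ sufficiently large, $K \Subset \Omega_j$; the boundary $\partial(\tilde U \cap \Omega_j)$ is contained in $K \cup (\overline{\tilde U} \cap \partial\Omega_j)$, and the inequality $c\psi \leq u_{\Omega_j}$ holds there (on $K$ by the choice of $c$ combined with $u_{\Omega_j} \geq u_\Omega$; on $\overline{\tilde U} \cap \partial\Omega_j$ since $u_{\Omega_j} = +\infty$). The comparison principle for $f(\lambda(-A^u)) = 1$ (available from earlier parts of the paper) then yields $c\psi \leq u_{\Omega_j}$ in $\tilde U \cap \Omega_j$. Letting $j \to \infty$, $c\psi \leq u_\Omega$ in $\tilde U \cap \Omega$, and regularity of $\Sigma$ follows since $c\psi \to +\infty$ at $\Sigma$.

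\emph{Main obstacles.} In the ``only if'' direction, the technical crux is interior $C^2$ regularity for $u_\Omega$; if this is not directly available in the stated generality, a workaround is an explicit construction $\psi = c \eta^{-(n-2)/2}$ with $\eta \in C^2$ positive, vanishing on $\Sigma$ and with $|\nabla\eta|$ bounded below near $\Sigma$, using the identity $-A^{\eta^{-(n-2)/2}} = -\eta\, D^2\eta + \tfrac12|\nabla\eta|^2 I$ whose leading behavior near $\Sigma$ has all eigenvalues equal to $\tfrac12|\nabla\eta|^2$ (which lies in $\Gamma_n \subset \Gamma$), so that for $c$ small the scaling argument of the ``if'' direction makes $\psi$ a subsolution. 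In the ``if'' direction, the comparison principle must accommodate the blow-up of $u_{\Omega_j}$ on $\partial\Omega_j$, which is typically handled by approaching $\partial\Omega_j$ from inside and using the precise boundary asymptotics from Theorem~\ref{Thm:SmoothDom}.
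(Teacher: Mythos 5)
The ``if'' direction of your proof is essentially the same as the paper's, differing only cosmetically. You scale the subsolution down ($c\psi$ with $c\le 1$) so that it lies below $u_j$ on the inner boundary of your annular region; the paper instead scales the supersolution up ($(1+a_M^{-1}M)u_j$ with factor $\ge 1$) so that it lies above $\psi$. Both rescalings are justified by homogeneity of $f$ and the cone structure of $\Gamma$, and the remaining steps — choose an intermediate neighborhood of $\Sigma$ whose boundary inside $\Omega$ is compact, compare on its intersection with $\Omega_j$ using the blow-up of $u_j$ at $\partial\Omega_j$, and let $j\to\infty$ — match the paper's argument.

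The ``only if'' direction is where your proposal has a genuine problem, and you should not wave it away. You correctly note that $\psi := u_\Omega$ is only known to be $C^{0,1}_{\rm loc}$, but your claim that ``standard interior estimates uniform in $j$ should transfer'' is not available here: the hypotheses \eqref{02}--\eqref{f-homo} impose no concavity of $f$ or convexity of $\Gamma$, so no interior $C^2$ estimate is established (or true in general) in this framework; Lipschitz regularity (Theorem \ref{thm:Lip}) is the best the paper obtains. Your explicit fallback $\psi = c\,\eta^{-(n-2)/2}$ with a $C^2$ defining function $\eta$ having $|\nabla\eta|$ bounded below would indeed give $-A^{\eta^{-(n-2)/2}} = -\eta\,\nabla^2\eta + \tfrac{1}{2}|\nabla\eta|^2 I$, but such an $\eta$ exists only when $\Sigma$ is (at least) a $C^2$ hypersurface, whereas Theorem \ref{Thm:05IV18-T1} allows $\Sigma$ to be an arbitrary compact subset of $\partial\Omega$. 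To be fair, the paper also dispatches the ``only if'' direction with a single sentence (``It suffices to show that if $\psi$ exists...''), inheriting the Loewner--Nirenberg convention where $u_\Omega$ is genuinely smooth; the clean resolution is to weaken ``$\psi\in C^2$ with $f(\lambda(-A^\psi))\ge 1$'' to ``$\psi$ is a continuous viscosity subsolution,'' which leaves the ``if'' argument untouched (the comparison principle of Proposition \ref{Prop:CP} already accommodates viscosity subsolutions) and makes the ``only if'' direction trivial with $\psi = u_\Omega$. In short: your direction-by-direction structure is fine and your ``if'' argument is correct, but you should flag the $C^2$ issue explicitly as a gap in the stated theorem rather than assert an unavailable regularity theory or a construction that presupposes smoothness of $\Sigma$.
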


Theorem \ref{Thm:05IV18-T1} and known dimensional estimates  for the Loewner-Nirenberg problem imply  the following corollary:

\begin{cor}\label{cor:irregular}
Let $\Omega$ be a domain of $\RR^n$, $n \geq 3$, and $(f,\Gamma)$ satisfy \eqref{02}-\eqref{f-homo}. Suppose that there is some constant $c_0 > 0$ such that
\begin{equation}
\lambda_1 + \ldots + \lambda_n \geq c_0 \text{ whenever } f(\lambda) \geq 1.
	\label{Eq:05IV18-E1}
\end{equation}
Let $\Sigma$ be a compact subset of $\partial\Omega$ such that $\partial\Omega \setminus \Sigma$ is also compact. If $\Sigma$ is regular for $(f,\Gamma)$ then it is regular for the Loewner-Nirenberg problem (i.e. for $(\sigma_1, \Gamma_1)$). In particular, if the Hausdorff dimension of $\Sigma$ is less than $\frac{n-2}{2}$, then $\Sigma$ is irregular for $(f,\Gamma)$.
\end{cor}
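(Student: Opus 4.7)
My plan is to deduce the first assertion from Theorem \ref{Thm:05IV18-T1} by converting a local subsolution for $(f,\Gamma)$ that blows up at $\Sigma$ into one for $(\sigma_1,\Gamma_1)$, via \eqref{Eq:05IV18-E1} combined with a scalar rescaling, and then to obtain the second assertion as the contrapositive of the first together with the known dimensional estimate for the classical Loewner--Nirenberg problem.

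Suppose $\Sigma$ is regular for $(f,\Gamma)$; in particular $u_\Omega^{(f,\Gamma)}\not\equiv 0$, so the forward direction of Theorem \ref{Thm:05IV18-T1} supplies an open neighborhood $U$ of $\Sigma$ and a function $\psi\in C^2(U\cap\Omega)$ satisfying $f(\lambda(-A^\psi))\geq 1$ in $U\cap\Omega$ and $\psi\to+\infty$ as $x\to\Sigma$. The hypothesis \eqref{Eq:05IV18-E1} then forces
\[
\sigma_1(\lambda(-A^\psi))=\operatorname{tr}(-A^\psi)\geq c_0
\]
throughout $U\cap\Omega$. A direct calculation from \eqref{conformal-Hessian} yields the scaling identity $A^{c\psi}=c^{-4/(n-2)}A^\psi$ for any constant $c>0$, so choosing $c$ sufficiently small (e.g.\ $c=c_0^{(n-2)/4}$) makes $\tilde\psi:=c\psi\in C^2(U\cap\Omega)$ satisfy $\sigma_1(\lambda(-A^{\tilde\psi}))\geq 1$ in $U\cap\Omega$ and still blow up at $\Sigma$. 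The reverse direction of Theorem \ref{Thm:05IV18-T1} applied to $(\sigma_1,\Gamma_1)$ with the barrier $\tilde\psi$ then yields regularity of $\Sigma$ for $(\sigma_1,\Gamma_1)$; the standing hypothesis $u_\Omega^{(\sigma_1,\Gamma_1)}\not\equiv 0$ needed to invoke that theorem follows from Lemma \ref{Lem:16IV18-Dic} since $\partial\Omega=\Sigma\cup(\partial\Omega\setminus\Sigma)$ is compact (and in the degenerate subcases where $\bar\Omega=\RR^n$ one can argue directly using $\tilde\psi$ itself as a lower barrier).

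For the second assertion, I appeal to the known fact that, for the classical scalar Loewner--Nirenberg problem, any compact set of Hausdorff dimension strictly less than $(n-2)/2$ is irregular. This was proved by Loewner and Nirenberg \cite{LoewnerNirenberg} for smooth submanifolds of such dimension, refined at the critical dimension by Aviles \cite{Aviles82-CPDE} and V\'eron \cite{Veron81-JDE}, and formulated at the level of general compact sets via (non-linear) capacity/Hausdorff dimension by Labutin \cite{Labutin:Yamabe}. The contrapositive of the first assertion then gives irregularity of such $\Sigma$ for $(f,\Gamma)$, completing the proof. The technical core is the elementary rescaling in the first half; the main point to verify carefully is the non-triviality of $u_\Omega^{(\sigma_1,\Gamma_1)}$ needed to apply Theorem \ref{Thm:05IV18-T1} in reverse, which I expect to be routine given the compactness of $\partial\Omega$.
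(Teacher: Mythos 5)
Your overall strategy matches the paper's: extract the barrier $\psi$ from the forward direction of Theorem~\ref{Thm:05IV18-T1}, use \eqref{Eq:05IV18-E1} together with $\operatorname{tr}(A^u) = -\tfrac{2}{n-2}u^{-(n+2)/(n-2)}\Delta u$ to see that $\psi$ (after a constant rescaling, which uses the identity $A^{c\psi}=c^{-4/(n-2)}A^\psi$ you correctly state) is a local barrier for the scalar Loewner--Nirenberg problem, conclude $\Sigma$ is regular for $(\sigma_1,\Gamma_1)$, and then invoke the classical dimension bound to get the second assertion. The paper simply cites \cite[Theorem 5]{LoewnerNirenberg} directly rather than its own Theorem~\ref{Thm:05IV18-T1} specialized to $(\sigma_1,\Gamma_1)$, and \cite[Theorem 7]{LoewnerNirenberg} for the Hausdorff dimension fact; these are the same ingredients.

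However, the one place where your route differs from the paper's introduces a genuine gap. By going through Theorem~\ref{Thm:05IV18-T1} for $(\sigma_1,\Gamma_1)$ you must supply the standing hypothesis $u_\Omega^{(\sigma_1,\Gamma_1)}\not\equiv 0$, and your justification of this is incorrect: compactness of $\partial\Omega$ does not rule out $u_\Omega\equiv 0$. Indeed the paper explicitly notes that for $\Omega=\RR^n\setminus\{0\}$ (compact boundary!) and $\Gamma\subset\Gamma_1$ one has $u_\Omega\equiv 0$, and Lemma~\ref{Lem:16IV18-Dic} only establishes a dichotomy without deciding which alternative holds. Your fallback --- ``use $\tilde\psi$ itself as a lower barrier'' --- is also not enough as written, since $\tilde\psi$ is only defined near $\Sigma$ and one cannot directly compare it with the $u_j$ without controlling things on $\partial U_M\cap\Omega$. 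The clean fix, which you should make explicit, is to use what regularity of $\Sigma$ for $(f,\Gamma)$ already gives you: $u_\Omega^{(f,\Gamma)}>0$, and by \eqref{Eq:05IV18-E1} the globally defined function $v:=c_0^{(n-2)/4}\,u_\Omega^{(f,\Gamma)}$ satisfies $\sigma_1(\lambda(-A^{v}))\geq 1$ on all of $\Omega$ in the viscosity sense. Comparing $v$ with the exhaustion solutions $u_j^{(\sigma_1)}$ (which are $+\infty$ on $\partial\Omega_j$) gives $u_\Omega^{(\sigma_1,\Gamma_1)}\geq v>0$. With that replacement your argument is complete; alternatively, you could sidestep the issue entirely as the paper does by appealing directly to \cite[Theorem 5]{LoewnerNirenberg}.
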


\begin{rem}
If $\Gamma$ is convex and $f$ is concave, then \eqref{Eq:05IV18-E1} holds.\\
\end{rem}

We finally deal with the case that $\Sigma$ is a smooth compact $(n-k)$-dimensional submanifold, $1 \leq k \leq n$. Following Loewner and Nirenberg \cite{LoewnerNirenberg}, our singular metric is modelled by the the metric coming from $\mathbb H^{n-k+1}\times \mathbb S^{k-1} \approx \RR^n \setminus \RR^{n-k}$ (which makes sense for $1 \leq k \leq n-1$).  Thus its conformal Hessian \eqref{conformal-Hessian}, denoted by $A_0$, has constant eigenvalues. More precisely,
\begin{equation}\label{model}
\lambda(-A_0)=(\underbrace{1, \ldots, 1}_{n - k + 1 \text{ entries}}, \underbrace{-1, \ldots, - 1}_{k-1 \text{ entries}}) =: v_k.
\end{equation}
This gives a solution to \eqref{Eq:X1}, after a suitable rescaling, if and only if $v_k \in \Gamma$, and in which case, the boundary $\RR^{n-k}$ is regular. 

Note that if $k' \geq k$, then $v_{k} - v_{k'} \in \bar\Gamma_n$, and so if $v_k \in \Gamma$, then $v_{k'} \in \Gamma$ for all $k' \leq k$. In particular, for $\Gamma = \Gamma_1$ and $\Gamma = \Gamma_2$, $v_k \in \Gamma$ if and only if $k < \frac{n-2}{2}$ and $k < \frac{n - \sqrt{n} + 2}{2}$, respectively. For other $\Gamma_\ell$ cones, the expression for the dividing dimension becomes more involved.

Our theorem states that the dimensional estimate in the above model is indeed the best one can expect: an $(n-k)$-dimensional submanifold $\Sigma$ is regular if $v_k \in \Gamma$ and is irregular if $v_k \notin \bar \Gamma$. (When $v_k \in \partial\Gamma$, we suspect, as for the Loewner-Nirenberg problem and as in the above model, that $\Sigma$ is irregular, but do not pursue this in the present paper.)

\begin{thm}\label{thm:smooth-Sigma}
Let $\Omega$ be a domain of $\RR^n$, $n \geq 3$, and $(f,\Gamma)$ satisfy \eqref{02}-\eqref{f-homo}. Suppose that a subset $\Sigma$ of $\partial \Omega$ is a smooth compact embedded $(n-k)$-dimensional submanifold of $\RR^n$, $1 \leq k \leq n$, and suppose that $\partial\Omega \setminus \Sigma$ is compact. Let $v_k$ be given by \eqref{model}.

\begin{itemize}
\item[\emph{i.}] If $v_k \in \Gamma$ and if $u_\Omega \not\equiv 0$, then $\Sigma$ is regular.

\item[\emph{ii.}] If $v_k \notin \bar\Gamma$
then $\Sigma$ is irregular. In fact, $u_\Omega$ is bounded near $\Sigma$.
\end{itemize}
\end{thm}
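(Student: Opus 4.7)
For part (i), the plan is to invoke Theorem \ref{Thm:05IV18-T1}: it suffices to exhibit a $C^2$ function $\psi$ on a neighborhood $U$ of $\Sigma$ in $\Omega$ with $\psi\to+\infty$ at $\Sigma$ and $f(\lambda(-A^\psi))\geq 1$ in $U\cap \Omega$. Motivated by the model metric on $\HH^{n-k+1}\times \SSphere^{k-1}$, I would take the explicit barrier
\[
\psi(x) = C\,\rho(x)^{-(n-2)/2}, \qquad \rho(x) := \mathrm{dist}(x,\Sigma),
\]
with $C>0$ small, and work in Fermi coordinates $(y,z)\in \Sigma\times \RR^k$ where $\rho=|z|$ is smooth in a tubular neighborhood of $\Sigma$. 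A direct computation in the flat case $\Sigma=\RR^{n-k}$ gives exactly $\lambda(-A^\psi) = \tfrac12 C^{-4/(n-2)} v_k$; for general smooth $\Sigma$, the Hessian of $\rho$ differs from the flat one only by terms involving the second fundamental form, which are lower-order in $\rho$ and so $\lambda(-A^\psi) \to \tfrac12 C^{-4/(n-2)} v_k$ as $\rho\to 0$. Since $\Gamma$ is open and $v_k\in \Gamma$, and since $f$ is continuous, positive, and homogeneous of some positive degree on $\Gamma$, choosing $C$ small enough (using the scaling of $f$) yields $f\bigl(\tfrac12 C^{-4/(n-2)}v_k\bigr)>1$, so after further shrinking the tubular neighborhood we obtain $f(\lambda(-A^\psi))\geq 1$ on $U\cap\Omega$, as required.

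For part (ii) we may assume $u_\Omega\not\equiv 0$, i.e.\ $u_\Omega>0$ in $\Omega$, and argue by contradiction. Suppose there exist $x_j\in \Omega$ with $x_j\to \Sigma$ and $M_j := u_\Omega(x_j)\to +\infty$. By Theorem \ref{Thm:SmoothDom} applied to the maximal solution on a ball $B_{d_j}(x_j)\subset \Omega$ (with $d_j=\mathrm{dist}(x_j,\Sigma)$) together with the monotonicity $u_\Omega\leq u_{B_{d_j}(x_j)}$, one has the universal bound $M_j\leq c\,d_j^{-(n-2)/2}$; in particular $\delta_j := M_j^{2/(n-2)} d_j$ is bounded. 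After passing to a subsequence, $\delta_j \to \delta_\infty\in [0,\infty)$. Rescale
\[
\tilde u_j(\xi) := M_j^{-1}\,u_\Omega\bigl(x_j + M_j^{-2/(n-2)}\xi\bigr),
\]
so that $\tilde u_j(0)=1$, $\tilde u_j$ solves the same equation on the rescaled domain, and (by rescaling the ambient bound) $\tilde u_j(\xi)\leq c\,\mathrm{dist}(\xi,\tilde\Sigma_j)^{-(n-2)/2}$, where the rescaled $\tilde\Sigma_j$ converges to an affine $(n-k)$-plane $T$ at distance $\delta_\infty$ from the origin. Lipschitz and Harnack estimates for viscosity solutions then yield, along a further subsequence, $\tilde u_j \to \tilde u_\infty$ locally uniformly on $\RR^n\setminus T$, with $\tilde u_\infty(0)=1$ and $\tilde u_\infty(\xi)\leq c\,\mathrm{dist}(\xi,T)^{-(n-2)/2}$.

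The contradiction comes from classifying $\tilde u_\infty$. Either $\tilde u_\infty$ extends continuously across $T$ to a positive entire solution of $f(\lambda(-A^u))=1$ on $\RR^n$, which is ruled out by a Liouville-type theorem for this class of equations; or $\tilde u_\infty$ blows up at $T$, so that $\tilde u_\infty$ is a Loewner--Nirenberg-type solution on $\RR^n\setminus T\cong \HH^{n-k+1}\times \SSphere^{k-1}$. In the latter case, invariance under the isometry group of $T$ (translations along $T$ and rotations in the normal directions), combined with a uniqueness/rigidity argument for such singular solutions on the model, forces $\tilde u_\infty$ to agree with the model solution up to a multiplicative constant; its conformal Hessian then has eigenvalues $\lambda(-A^{\tilde u_\infty}) = c\,v_k$ for some $c>0$ everywhere, forcing $v_k\in \Gamma$ and contradicting $v_k\notin \bar\Gamma$. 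The main obstacle lies in implementing these limiting arguments in the fully non-linear viscosity framework: compactness of $\tilde u_j$ in the absence of convexity of $\Gamma$, the Liouville theorem on $\RR^n$, and the rigidity of singular solutions on $\RR^n\setminus \RR^{n-k}$ are all non-trivial inputs, and I would expect them to require separate treatment earlier in the paper.
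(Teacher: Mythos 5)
Your argument for part \emph{i.}\ matches the paper's proof essentially line by line: both take $\psi = c\,\rho^{-(n-2)/2}$ in a tubular neighbourhood of $\Sigma$, compute $-A^\psi = c^{-4/(n-2)}\big(\tfrac12 I - \rho\,\nabla^2\rho\big)$, show via the Fermi/normal-form expansion of the distance function that $\lambda(\rho\,\nabla^2\rho)$ tends to $(0,\ldots,0,1,\ldots,1,0)$ as $\rho\to 0$ (so that $\lambda(-A^\psi)\to\tfrac12 c^{-4/(n-2)}v_k$), and then use openness of $\Gamma$, continuity and positive homogeneity of $f$ to pick $c$ and the width $\delta$ so that $f(\lambda(-A^\psi))\geq 1$ there; Theorem~\ref{Thm:05IV18-T1} then applies.

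For part \emph{ii.}\ your route (blow-up, compactness, classification of the limit) is genuinely different from the paper's and, even as a programme, has several gaps that the present paper does not supply. (a) Extracting a limit from the rescaled sequence $\tilde u_j$ via Theorem~\ref{thm:Lip} requires a local positive lower bound on $\tilde u_j$ as well as the upper bound, since the Lipschitz constant there is $C(b/a,\ldots)$; the upper bound rescales nicely, but the lower bound is a Harnack-type estimate that is not proved in the paper and is delicate for this class of (possibly non-convex $\Gamma$, ``negative curvature'') fully non-linear equations. Knowing $\tilde u_j(0)=1$ does not break the circularity, because to propagate that lower bound one already needs the gradient estimate. (b) There is no mechanism given for why the limit $\tilde u_\infty$ should be invariant under the isometries of the limiting plane $T$: the $\tilde u_j$ themselves are not invariant, and no monotonicity or sliding argument is available here. (c) The rigidity of singular solutions on $\RR^n\setminus\RR^{n-k}\cong\HH^{n-k+1}\times\SSphere^{k-1}$ (that any such solution must be the model one, hence forcing $v_k\in\Gamma$) is exactly the kind of classification result that is non-trivial and nowhere established. (Only your Liouville step is easy: $u\leq u^{(in)}_{R,x_0}(x_0)=\alpha R^{-(n-2)/2}\to 0$ by Lemma~\ref{Lem:UpBnd}, so no entire solution with $u(0)=1$ exists.)

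The paper instead proves \emph{ii.}\ by a direct barrier, avoiding all of (a)--(c). It takes the two-parameter family $\psi_{c,d}=(c\rho^{-\alpha}+d)^\beta$ and computes that, in Fermi coordinates and up to a positive factor, $\lambda(-A^{\psi_{c,d}})$ equals $v_k$ plus an error which is controlled by $\alpha$, by $O(\rho)$, and by $\zeta=\tfrac{\alpha\beta}{n-2}\tfrac{c\rho^{-\alpha}}{c\rho^{-\alpha}+d}$ (which ranges in $(0,\tfrac{\alpha\beta}{n-2})$). Choosing $\alpha$ small and $\tfrac{\alpha\beta}{n-2}\in(\tfrac12,\tfrac12+\epsilon)$ keeps this vector in a fixed neighbourhood of $v_k$ that is disjoint from $\bar\Gamma$, so $\lambda(-A^{\psi_{c,d}})\notin\bar\Gamma$ throughout $\{0<\rho<\delta\}$; in particular $\psi_{c,d}$ is (trivially) a viscosity super-solution there. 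Since $\alpha\beta>\tfrac{n-2}2$, one has $\psi_{c,d}\geq c\rho^{-\alpha\beta}$ which dominates the upper bound $u_\Omega=O(\rho^{-(n-2)/2})$ of Lemma~\ref{Lem:UpBnd} near $\Sigma$, while choosing $d$ large handles the outer boundary $\{\rho=\delta\}$. The comparison principle then gives $u_\Omega\leq\psi_{c,d}$ on $\{0<\rho<\delta\}$, and letting $c\to 0$ yields $u_\Omega\leq d^\beta$ near $\Sigma$. So if you want a self-contained proof of \emph{ii.}, you should either carry out this barrier construction, or first develop the Harnack and classification machinery that your blow-up argument presupposes.
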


As an application of the above theorem we obtain the following result for isolated singularities:

\begin{cor}\label{cor-last}
Let $n \geq 3$ and $(f,\Gamma)$ satisfy \eqref{02}-\eqref{f-homo}. Suppose that $\Gamma \subset \Gamma_1$.
\begin{enumerate}[(a)]
\item If $u$ solves \eqref{Eq:X1} in a punctured ball $B_r(0) \setminus \{0\}$, then $u$ is bounded in $B_r(0)$.
\item If $u$ solves \eqref{Eq:X1} on $\RR^n \setminus B_r(0)$, then $u(x) = O(|x|^{2-n})$ as $|x| \rightarrow \infty$.
\end{enumerate}
\end{cor}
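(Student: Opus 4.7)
The plan is to reduce both parts to Theorem \ref{thm:smooth-Sigma}(ii) applied to the $0$-dimensional submanifold $\Sigma = \{0\}$, part (b) following from part (a) via a Kelvin transform.

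For (a), take $\Omega = B_r(0) \setminus \{0\}$ and $\Sigma = \{0\}$, which is a smooth compact embedded $(n-k)$-dimensional submanifold of $\RR^n$ with $k = n$. According to \eqref{model},
\[
v_n = (1, -1, \ldots, -1) \in \RR^n, \qquad \sigma_1(v_n) = 2 - n < 0,
\]
so the hypothesis $\Gamma \subset \Gamma_1$ (which forces $\bar\Gamma \subset \bar\Gamma_1$) yields $v_n \notin \bar\Gamma$. Since $\partial\Omega \setminus \Sigma = \partial B_r(0)$ is compact, Theorem \ref{thm:smooth-Sigma}(ii) applies and shows that $u_\Omega$ is bounded in a neighborhood of $0$. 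To transfer this bound to the arbitrary solution $u$, I use the maximality of $u_\Omega$: on each smooth exhausting subdomain $\Omega_j \Subset \Omega$, the solution $u_j$ of Theorem \ref{Thm:SmoothDom} blows up on $\partial\Omega_j$ while $u$ remains continuous and finite there, so the comparison principle gives $u \le u_j$ in $\Omega_j$, and letting $j \to \infty$ yields $u \le u_\Omega$ in $\Omega$. Hence $u$ itself is bounded in a neighborhood of $0$, establishing (a).

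For (b), consider the Kelvin transform
\[
v(y) := |y|^{2-n}\,u\bigl(y/|y|^2\bigr),\qquad y \in B_{1/r}(0) \setminus \{0\}.
\]
By the conformal invariance of \eqref{Eq:X1} under M\"obius inversions, which preserves the viscosity-solution property since it amounts to a smooth change of variables together with the appropriate rescaling of the conformal factor, $v$ is a viscosity solution of \eqref{Eq:X1} on the punctured ball $B_{1/r}(0) \setminus \{0\}$. Part (a) applied to $v$ produces $\epsilon, C > 0$ with $v(y) \le C$ for $0 < |y| \le \epsilon$; substituting $x = y/|y|^2$, for which $|x| = 1/|y|$, this inequality reads $u(x) \le C\,|x|^{2-n}$ for $|x| \ge 1/\epsilon$, which is the claimed $O(|x|^{2-n})$ decay.

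The only genuinely technical point is the comparison inequality $u \le u_\Omega$ for an arbitrary viscosity solution $u$ on the singular domain $B_r(0) \setminus \{0\}$; this reduces to the maximum-principle bound $u \le u_j$ on each smooth $\Omega_j$, which is standard once $u_j \to \infty$ on $\partial\Omega_j$ while $u$ stays locally bounded there, and is already part of the framework underlying the construction of $u_\Omega$ in Section \ref{section:general-domains}. Everything else amounts to the elementary check $v_n \notin \bar\Gamma$ and to the routine Kelvin-transform computation.
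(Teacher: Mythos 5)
Your proof is correct and follows essentially the same route as the paper: reduce (b) to (a) by a Kelvin inversion, reduce (a) to the bound for $u_\Omega$ via the maximality of $u_\Omega$ (which the paper invokes directly and you rederive by comparison with the exhausting solutions $u_j$), observe $v_n \notin \bar\Gamma_1 \supset \bar\Gamma$, and apply Theorem \ref{thm:smooth-Sigma}(ii) with $\Sigma = \{0\}$, $k = n$.
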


Let us make a remark for the positive curvature case. The available conditions for the existence of a singular metrics are far from being optimal. For instance, the first named author showed in \cite{Gonzalez05} that a singular metric with positive $\sigma_\ell$ must have a singular set $\Sigma$ of Hausdorff dimension less than $\frac{n-2\ell}{2}$, and established in \cite{Gonzalez06-RemSing} conditions for removability. On the other hand, by inspecting the model above e.g. for $\Gamma_2$, one expects that the sharp dimensional bound is
\begin{equation}\label{model-sigma2-positive}
\lambda(A_0)\in\Gamma_2\quad\text{iff}\quad k > \tfrac{n+\sqrt{n}+2}{2},
\end{equation}
Moreover, in the forthcoming paper \cite{Gonzalez-Mazzieri}, the authors show that for $\Sigma$ a submanifold, \eqref{model-sigma2-positive} seems to be sharp. 

In a related note for the positive curvature case, isolated singularities are much better understood and indeed they can be classified, see Chang, Han and Yang \cite{C-H-Y}, Gonz\'alez \cite{Gonzalez06, Gonzalez06-RemSing}, Han, Li and Teixeira \cite{HLT10}, Li \cite{Li06-JFA}, Ou \cite{Ou:singularities}. See also \cite{LiNgBocher} for related result, which in a sense can be viewed as the analogue in the ``zero'' case.\\

The existence statement in Theorem \ref{Thm:SmoothDom} is proved using Perron's method (cf. Ishi \cite{Ishii89-CPAM}) adapted to our fully non-linear equation. Here the essential step is the comparison principle from a joint work of the second and third named authors with B. Wang \cite{LiNgWang}. What is more delicate is the behavior of the solution near the boundary. As in \cite{LoewnerNirenberg}, we compare to the canonical solutions on suitable balls in order to obtain a-priori estimates: for instance in Theorem \ref{thm:Lip} we show that any viscosity solution must be locally Lipschitz.

In the particular case that $\Sigma$ is a smooth hypersurface, the boundary behavior is very precise (see Lemma \ref{Lem:BBehavior}). The proof of Theorem \ref{thm:smooth-Sigma} involves a very delicate choice of test function in Theorem \ref{Thm:05IV18-T1} and the structure of the cone $\Gamma$ is a crucial ingredient.

What it remains to do is to extend the classification results of Theorem \ref{thm:smooth-Sigma} to any
singular set $\Sigma\subset\partial\Omega$, not-necessarily smooth. We expect that a new notion of non-linear capacity would be an essential tool. Several definitions of capacity for $\sigma_\ell$ type problems have already been proposed in the literature (see \cite{Labutin:potential-estimates} and \cite{Gonzalez06-RemSing}) but it does not seem to be straightforward to adapt them for singular problems.

Another possible direction of research is the study of the geometry of a hypersurface in a conformal manifold $\Omega$. In the papers \cite{Graham:volume-renormalization,Gover-Waldron:renormalized-volume} the authors perform a volume renormalization procedure for a singular Yamabe metric in the negative curvature case, this is, for a solution to \eqref{Eq:Yamabe}-\eqref{Eq:X1BC}. In fact, this is the closest one can get to a Poincar\'e-Einstein manifold with a given conformal infinity and the process yields higher dimensional analogues of the Willmore energy. The main step in their proofs is to understand very precisely the asymptotic expansion of the boundary condition \eqref{new-BC}. It would be interesting to replace the Yamabe equation \eqref{Eq:Yamabe} by $\sigma_\ell$ in order to obtain different conformal information.\\

The structure of the paper is as follows:  Section \ref{Section:preliminaries} recalls some ingredients in the later proofs such as the definition of viscosity solutions, the canonical solution on balls and the comparison principle for fully-nonlinear equations. Section \ref{section:apriori} contains the proof of the $C^0$ and Lipschitz estimates, which, by Perron's method, yield the proof of Theorem \ref{Thm:SmoothDom}. Then, in Section \ref{section:general-domains} we turn to general domains and give the proof of Theorem \ref{Thm:05IV18-T1}. By finding a suitable test function in this theorem, we can give a precise characterization of the dimension of singular set when $\Sigma$ is a smooth submanifold and complete the proof of Theorem \ref{thm:smooth-Sigma}.


\section{Preliminaries} \label{Section:preliminaries}

\subsection{Viscosity solutions}

For any set $S \subset\mathbb{R}^{n}$, we use $\mbox{USC}(S)$ to denote the set of functions $\psi:S\rightarrow\mathbb{R}\cup\{-\infty\}$, $\psi \not\equiv -\infty$ in $S$, satisfying
\begin{equation*}
\limsup\limits_{x\rightarrow\bar{x}}\psi(x)\leq \psi(\bar{x}),\quad \forall \bar{x}\in S.
\end{equation*}
Similarly, we use $\mbox{LSC}(S)$ to denote the set of functions $\psi: S\rightarrow\mathbb{R}\cup\{+\infty\}$, $\psi \not\equiv +\infty$  in $S$, satisfying
\begin{equation*}
\liminf\limits_{x\rightarrow\bar{x}}\psi(x)\geq \psi(\bar{x}),\quad \forall \bar{x}\in S.
\end{equation*}

\begin{Def}\label{Def:ViscositySolution}
Let $\Omega\subset\mathbb{R}^{n}$ be an open set. We say that a function  $u \in USC(\Omega)$ ($LSC(\Omega)$) is a sub-solution (super-solution) to \eqref{Eq:X1} in the viscosity sense, or alternatively
\begin{equation*}
f(\lambda(-A^u)) \geq 1 \quad \left(f(\lambda(-A^u)) \leq 1\right) \quad\mbox{in }\Omega,
\end{equation*}
if for any $x_{0}\in\Omega$, $\varphi\in C^{2}(\Omega)$, $(u-\varphi)(x_{0})=0$ and
\begin{equation*}
u-\varphi\leq0\quad(u-\varphi\geq0),\quad\mbox{near }x_{0},
\end{equation*}
there holds
\begin{equation*}
f(\lambda(-A^\varphi)) \geq 1 \quad \left(-A^\varphi(x_{0})\in \RR^n \setminus \bar\Gamma \text{ or } f(\lambda(-A^\varphi)) \leq 1\right).
\end{equation*}

We say that a function $u \in C^0(\Omega)$ satisfies \eqref{Eq:X1} in the viscosity sense if it is both a sub- and a super-solution to \eqref{Eq:X1} in the viscosity sense.
\end{Def}

\subsection{Comparison principle}

The following is a consequence of \cite[Theorem 3.2]{LiNgWang}.

\begin{prop}\label{Prop:CP}
Let $\Omega$ be a bounded domain of $\RR^n$, $n \geq 3$, and $(f,\Gamma)$ satisfy \eqref{02}-\eqref{f-homo}. Assume that $w \in LSC(\Omega)$ and $v \in USC(\Omega)$ are respectively a super-solution and a sub-solution of \eqref{Eq:X1} and assume that $w \geq v$ on $\partial\Omega$. Then $w \geq v$ in $\Omega$.
\end{prop}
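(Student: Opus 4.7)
The plan is to derive Proposition \ref{Prop:CP} directly from the comparison principle of \cite[Theorem 3.2]{LiNgWang} for viscosity sub- and super-solutions of fully nonlinear conformally invariant equations on bounded domains. Our hypotheses \eqref{02}--\eqref{f-homo} on $(f,\Gamma)$ and the viscosity notion in Definition \ref{Def:ViscositySolution} are precisely of the type covered in that reference, so matching up the setup is essentially bookkeeping.

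The point requiring a small amount of work is that \cite[Theorem 3.2]{LiNgWang} is cleanest for \emph{strict} sub-solutions, while we only know $f(\lambda(-A^v)) \geq 1$. To create strictness I would use the scaling covariance of the conformal Hessian. A direct computation from \eqref{conformal-Hessian} gives
\[
A^{cv} = c^{-4/(n-2)} A^v \qquad \text{for every constant } c > 0.
\]
Denoting by $\alpha > 0$ the homogeneity degree of $f$ from \eqref{f-homo}, this shows that for each $\epsilon \in (0,1)$ the rescaling $v_\epsilon := (1-\epsilon) v$ is a viscosity sub-solution of the strict inequality
\[
f\bigl(\lambda(-A^{v_\epsilon})\bigr) \geq (1-\epsilon)^{-4\alpha/(n-2)} =: 1 + \eta > 1
\]
(checked by pulling back a test function $\psi$ touching $v_\epsilon$ from above at $x_0$ to the test function $\psi/(1-\epsilon)$ for $v$). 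Since $v > 0$ in $\Omega$, we also have $v_\epsilon \leq v \leq w$ at the boundary in the appropriate semicontinuous sense, so the hypotheses of \cite[Theorem 3.2]{LiNgWang} apply to the pair $(v_\epsilon, w)$. This yields $w \geq v_\epsilon$ in $\Omega$, and sending $\epsilon \to 0^+$ gives $w \geq v$.

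I expect the only real obstacle to be the careful handling of the boundary inequality with merely semicontinuous $v$ and $w$: the condition ``$w \geq v$ on $\partial\Omega$'' must be read as $\liminf_{x \to x_0} w(x) \geq \limsup_{x \to x_0} v(x)$ for each $x_0 \in \partial\Omega$. The standard fix is to work on the slightly smaller domains $\Omega_\delta := \{x \in \Omega : \textrm{dist}(x,\partial\Omega) > \delta\}$, verify $w > v_\epsilon$ on $\partial\Omega_\delta$ for suitable $\delta = \delta(\epsilon)>0$ from the strictness $\eta > 0$ and the semicontinuity of $w,v$, apply the comparison principle of \cite{LiNgWang} on $\Omega_\delta$, and then pass to the limit $\delta \to 0^+$ before $\epsilon \to 0^+$. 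All the genuinely nontrivial work -- the Crandall--Ishii doubling argument for the conformal Hessian operator -- is absorbed into \cite[Theorem 3.2]{LiNgWang}, so once the strict-sub-solution reduction and boundary bookkeeping are in place the proof is complete.
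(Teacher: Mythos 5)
Your proposal misreads what \cite[Theorem 3.2]{LiNgWang} actually provides, and this creates a genuine circularity. As the paper explains, that result is a \emph{principle of propagation of touching points}: after the substitution $\psi = u^{-4/(n-2)}$, it says that if $F[\psi_1] \notin U$, $F[\psi_2] \in \bar U$, \emph{$\psi_1 \geq \psi_2$ throughout $\Omega$}, and $\psi_1 > \psi_2$ on $\partial\Omega$, then $\psi_1 > \psi_2$ in $\Omega$. In other words, a weak ordering in the whole interior together with strict ordering on the boundary upgrades to strict ordering in the interior. It is \emph{not} a weak comparison principle of the type ``boundary ordering implies interior ordering.'' Your plan applies it to the pair $(v_\epsilon, w)$ knowing only the boundary ordering, which omits the required hypothesis $v_\epsilon \leq w$ in $\Omega$ --- precisely the thing you are trying to prove. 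The fall-back you suggest, shrinking to $\Omega_\delta$ and verifying $w > v_\epsilon$ on $\partial\Omega_\delta$, has the same problem: $\partial\Omega_\delta$ lies in the interior of $\Omega$, where the ordering between $w$ and $v$ (or $v_\epsilon$) is not yet known; the only information available, $w \geq v$ on $\partial\Omega$, does not by itself propagate to $\partial\Omega_\delta$.

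The paper closes this gap with a compactness/contradiction argument rather than a direct application. Assume for contradiction that $w - v < 0$ somewhere. Because $w$ is LSC, $v$ is USC, $\Omega$ is bounded and the boundary inequality $w \geq v$ is available, there is a $c > 1$ with $cw \geq v$ in all of $\Omega$, with equality attained somewhere in the interior. Since $(f,\Gamma)$ is cone-monotone in the sense of \eqref{04}, $cw$ is still a super-solution, and $v$ is a sub-solution, so the \emph{interior} weak ordering hypothesis of the propagation-of-touching-points result is satisfied for the pair $(v, cw)$; the conclusion $v < cw$ in $\Omega$ then contradicts the interior touching. Notice also that, contrary to your sketch, no strictness in the equation is needed at all: the dichotomy between $M \notin U$ and $M \in \bar U$ in the paper's formulation already encodes the asymmetry, so the rescaling $v \mapsto (1-\epsilon)v$ to manufacture a strict sub-solution is not the missing ingredient. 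What is missing in your argument is the touching-point reduction (i.e., the choice of $c$), and without it the invocation of \cite[Theorem 3.2]{LiNgWang} does not go through.
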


\begin{proof}
Let $U$ be the set of symmetric $n \times n$ matrices $M$ such that 
\[
-\frac{1}{2}\lambda(M) \in \RR^n \setminus \{\mu \in\Gamma: f(\mu) \geq 1\},
\]
i.e. either $-\frac{1}{2}\lambda(M) \notin \bar\Gamma$, or $-\frac{1}{2}\lambda(M) \in \bar\Gamma$ and $f(-\frac{1}{2}\lambda(M)) < 1$. Then $U$ satisfies
\begin{align*}
M \in U, N \geq 0 &\Longrightarrow M + N \in U,\\
M \in U, c \in (0,1) &\Longrightarrow c\,M  \in U.
\end{align*}

Let $u = \psi^{-\frac{n-2}{4}}$. Then
\begin{align*}
A^u
	& = -\frac{2}{n-2} u^{-\frac{n+2}{n-2}}\,\nabla^2 u + \frac{2n}{(n-2)^2} u^{-\frac{2n}{n-2}} \nabla u \otimes \nabla u - \frac{2}{(n-2)^2}\,u^{-\frac{2n}{n-2}}\,|\nabla u|^2\,I\\
	&= \frac{1}{2} \nabla^2 \psi - \frac{1}{4} \psi^{-1} \nabla \psi \otimes \nabla \psi
	 - \frac{1}{8}\,\psi^{-1}\,|\nabla \psi|^2\,I =: \frac{1}{2}F[\psi].
\end{align*}
Then the pair $(F,U)$ satisfies the principle of propagation of touching points \cite[Theorem 3.2]{LiNgWang}, i.e. if $\psi_1$ and $\psi_2$ are such that $F[\psi_1] \notin U$, $F[\psi_2] \in \bar U$, $\psi_1 \geq \psi_2$ in $\Omega$ and $\psi_1 > \psi_2$ on $\partial\Omega$, then $\psi_1 > \psi_2$ in $\Omega$.

Assume by contradiction that $w - v$ is negative somewhere in $\Omega$. Then there is some $c > 1$ such that $c w \geq v$ in $\Omega$ and $cw - v$ vanishes somewhere in $\Omega$. Note that $cw$ is a super-solution of \eqref{Eq:X1} and $v$ is a sub-solution of \eqref{Eq:X1}. Hence $F[(cw)^{-\frac{n-2}{2}}] \in \bar U$ and $F[v^{-\frac{n-2}{2}}] \notin U$.  Hence, by the principle of propagation of touching points, $cw > v$ in $\Omega$, which contradicts our choice of $c$.
\end{proof}

\subsection{Canonical solutions on balls}

As in \cite{LoewnerNirenberg}, we will make use of the following solutions on balls and on exterior of balls, which come from the Poincar\'e metric: There exists a unique $\alpha = \alpha(f) > 0$ such that the functions
\[
u_{R,x_0}^{(in)}(x) = \alpha \Big(\frac{R}{R^2 - |x - x_0|^2}\Big)^{\frac{n-2}{2}}, \qquad u_{R,x_0}^{(out)}(x) = \alpha \Big(\frac{R}{|x - x_0|^2 - R^2}\Big)^{\frac{n-2}{2}}
\]
satisfy
\[
f(\lambda(-A^{u_{R,x_0}^{(in)}})) = 1 \text{ in } B_R(x_0) \text{ and } f(\lambda(-A^{u_{R,x_0}^{(out)}})) = 1 \text{ in } \RR^n \setminus B_R(x_0) .
\]
In fact, $\lambda(-A^{u_{R,x_0}^{(in)}})$ is constant in $B_R(x_0)$ and $\lambda(-A^{u_{R,x_0}^{(out)}})$ is constant $\RR^n \setminus B_R(x_0)$.

\begin{lem}
$u_{R,x_0}^{(in)}$ is the unique solution to \eqref{Eq:X1} for $\Omega = B_{R}(x_0)$.
\end{lem}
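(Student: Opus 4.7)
The plan is to use the comparison principle (Proposition \ref{Prop:CP}) to sandwich any solution $u$ of \eqref{Eq:X1}--\eqref{Eq:X1BC} on $B_R(x_0)$ between two families of canonical solutions on balls whose radii approach $R$. By translation invariance we may assume $x_0 = 0$ and write $u_R^{(in)}$ for $u_{R,0}^{(in)}$.

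For the upper bound, I would fix $r \in (0,R)$ and compare $u$ with $u_r^{(in)}$ on the smaller ball $B_r$. Both are viscosity solutions of $f(\lambda(-A^\cdot)) = 1$ there. Since $u$ is continuous on $\overline{B_r} \subset B_R$, it is bounded on $\partial B_r$, whereas $u_r^{(in)}(x) \to +\infty$ as $|x| \to r$. Thus $u \leq u_r^{(in)}$ on $\partial B_r$ in the appropriate boundary sense, and Proposition \ref{Prop:CP} (applied with $w = u_r^{(in)}$ and $v = u$) gives $u \leq u_r^{(in)}$ in $B_r$. Letting $r \to R^-$, the explicit formula yields $u_r^{(in)}(x) \to u_R^{(in)}(x)$ pointwise for each $x \in B_R$, so $u \leq u_R^{(in)}$ in $B_R$.

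For the lower bound I would run the symmetric argument on enlarged balls: for each $R' > R$, the function $u_{R'}^{(in)}$ is smooth and bounded on $\overline{B_R}$ while $u(x) \to +\infty$ as $|x| \to R$, so $u \geq u_{R'}^{(in)}$ on $\partial B_R$. Applying Proposition \ref{Prop:CP} with $w = u$ and $v = u_{R'}^{(in)}$ gives $u \geq u_{R'}^{(in)}$ in $B_R$, and sending $R' \to R^+$ produces $u \geq u_R^{(in)}$ in $B_R$. Combining the two inequalities gives $u \equiv u_R^{(in)}$.

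The only delicate point is ensuring that the boundary inequalities $u \leq u_r^{(in)}$ on $\partial B_r$ and $u \geq u_{R'}^{(in)}$ on $\partial B_R$ meet the hypotheses of Proposition \ref{Prop:CP}; this is routine because in each case one of the two functions remains uniformly bounded on the relevant boundary while the other either is continuous up to it or diverges to $+\infty$, so the required pointwise inequality (understood as a $\liminf/\limsup$ from inside $\Omega$) is automatic. No extra regularity of $u$ is needed beyond what is supplied by Theorem \ref{Thm:SmoothDom} (continuity and local Lipschitz bounds in $B_R$), and the homogeneity \eqref{f-homo} is used only implicitly through the construction of $u_r^{(in)}$ and $u_{R'}^{(in)}$.
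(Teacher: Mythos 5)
Your argument is exactly the paper's proof: squeeze $u$ between $u_{R''}^{(in)}$ on balls $B_{R''}$ with $R''<R$ (upper bound, using that $u_{R''}^{(in)}$ blows up on $\partial B_{R''}$ while $u$ is bounded there) and $u_{R'}^{(in)}$ with $R'>R$ (lower bound, using that $u$ blows up on $\partial B_R$ while $u_{R'}^{(in)}$ is bounded there), then let the radii tend to $R$. One small caution: you should not invoke Theorem \ref{Thm:SmoothDom} for the regularity of $u$, since this lemma is a building block in its proof; fortunately you don't actually need it---continuity of $u$ is already built into Definition \ref{Def:ViscositySolution}, and that is all the argument uses.
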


\begin{proof}
We write $B_R = B_R(x_0)$. Let $u$ be a solution to \eqref{Eq:X1} with $\Omega = B_R$. By the comparison principle,
\[
u \geq u_{R',x_0}^{(in)} \text{ in } B_R \text{ for all } R' > R
\]
and
\[
u \leq u_{R'',x_0}^{(in)} \text{ in } B_{R''} \text{ for all } 0 < R'' < R.
\]
Sending $R' \rightarrow R$ and $R'' \rightarrow R$, we obtain the conclusion.
\end{proof}


\section{A-priori estimates and solutions on bounded domains with smooth boundary}\label{section:apriori}

\subsection{$C^0$ estimates}

We start with giving an upper bound.
\begin{lem} \label{Lem:UpBnd}
Let $\Omega$ be a domain in $\RR^n$, $n \geq 3$, and $(f,\Gamma)$ satisfy \eqref{02}-\eqref{f-homo}. Assume that $0 < v \in USC(\Omega)$ is a sub-solution to \eqref{Eq:X1}. Then
\[
\textrm{dist}(x,\partial\Omega)^{\frac{n-2}{2}} v(x) \leq \alpha,
\]
where $\alpha$ is the constant in the expression for the canonical solution on balls.
\end{lem}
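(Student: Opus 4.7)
The natural approach is to compare the subsolution $v$ with the explicit canonical interior solutions $u_{\rho,x_0}^{(in)}$ constructed in the preceding subsection, taking advantage of the fact that $u_{\rho,x_0}^{(in)}$ blows up on $\partial B_\rho(x_0)$. The comparison principle (Proposition \ref{Prop:CP}) then forces $v(x_0) \leq u_{\rho,x_0}^{(in)}(x_0) = \alpha \rho^{-\frac{n-2}{2}}$ for any $\rho < \textrm{dist}(x_0,\partial\Omega)$, and the desired estimate follows by sending $\rho$ to the distance.

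In more detail, fix $x_0 \in \Omega$, set $R = \textrm{dist}(x_0,\partial\Omega) > 0$, and pick any $\rho \in (0,R)$, so that $\overline{B_\rho(x_0)}$ is a compact subset of $\Omega$. Since $v \in \mathrm{USC}(\Omega)$, it is bounded from above on this compact set by some constant $M = M(\rho)$. The canonical solution $u_{\rho,x_0}^{(in)}$ is a classical (hence viscosity) solution of \eqref{Eq:X1} on $B_\rho(x_0)$ with $u_{\rho,x_0}^{(in)}(x) \to +\infty$ as $|x - x_0| \to \rho^-$. Consequently, we may choose $\rho' \in (0,\rho)$ sufficiently close to $\rho$ so that $u_{\rho,x_0}^{(in)} > M \geq v$ on $\partial B_{\rho'}(x_0)$. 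Applying Proposition \ref{Prop:CP} with the bounded domain $B_{\rho'}(x_0)$, super-solution $u_{\rho,x_0}^{(in)}$, and sub-solution $v$, we conclude
\[
v(x) \leq u_{\rho,x_0}^{(in)}(x) \quad \text{for all } x \in B_{\rho'}(x_0).
\]
Evaluating at $x_0$ gives $v(x_0) \leq \alpha \rho^{-\frac{n-2}{2}}$. Letting $\rho \to R^-$ yields $\textrm{dist}(x_0,\partial\Omega)^{\frac{n-2}{2}} v(x_0) \leq \alpha$, as claimed.

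The only mildly delicate point is that the comparison principle as stated requires a finite boundary inequality, whereas the natural comparison would be against $u_{R,x_0}^{(in)}$ which is infinite on $\partial B_R(x_0)$ (a set that may touch $\partial\Omega$, where $v$ is a priori uncontrolled). The device of shrinking simultaneously to a slightly smaller ball $B_{\rho'}(x_0)$ — thereby trading the infinite boundary value for a large finite one on a set where $v$ is known to be bounded — sidesteps this issue cleanly and is the only step that requires care.
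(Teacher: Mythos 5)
Your proposal is correct and follows essentially the same line as the paper: compare $v$ against the explicit canonical ball solution $u_{\rho,x_0}^{(in)}$ via the comparison principle, then let the radius tend to $\textrm{dist}(x_0,\partial\Omega)$. The paper applies the comparison directly on $B_R(x_0)$ with $R = \textrm{dist}(x_0,\partial\Omega)$, leaving the boundary issue implicit; your device of shrinking to $B_{\rho'}(x_0)$ so that the boundary inequality is between finite values is a slightly more careful way to justify the same step.
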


\begin{proof} Fix some $x \in \Omega$ and let $R = \textrm{dist}(x,\partial\Omega)$. By the comparison principle, we have
\[
v \leq u_{R, x}^{(in)} \text{ in } B_{R}(x).
\]
In particular, we have
\[
v(x) \leq u_{R, x}^{(in)}(x) = \alpha R^{-\frac{n-2}{2}},
\]
which implies the assertion.
\end{proof}

We turn to lower bounds. 

\begin{lem} \label{Lem:LowBndEZ}
Let $\Omega$ be a bounded domain in $\RR^n$, $n \geq 3$, and $(f,\Gamma)$ satisfy \eqref{02}-\eqref{f-homo}. If $0 < w \in LSC(\Omega)$ is a super-solution to \eqref{Eq:X1}-\eqref{Eq:X1BC}, and if $\RR^n \setminus \bar \Omega$ contains a ball $B(x_0, R)$, then 
\[
w \geq u_{R,x_0}^{(out)} > 0 \text{ in } \Omega.
\]
\end{lem}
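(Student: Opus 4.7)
The strategy will be to apply the comparison principle (Proposition~\ref{Prop:CP}) with $w$ as the super-solution and a suitable copy of the canonical exterior solution as the sub-solution. One cannot use $u_{R,x_0}^{(out)}$ directly, since in general $\partial B_R(x_0)$ may be tangent to $\partial \Omega$ at some points, at which $u_{R,x_0}^{(out)}$ blows up while sitting on $\partial\Omega$, making the required boundary comparison awkward. The fix I propose is to first shrink the radius, compare with $u_{R',x_0}^{(out)}$ for $R' \in (0,R)$, and then pass to the limit $R' \uparrow R$.

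The setup goes as follows. Fix $R' \in (0,R)$. Since $B_R(x_0) \subset \RR^n \setminus \bar\Omega$, we have $\bar\Omega \subset \RR^n \setminus B_R(x_0) \subset \RR^n \setminus \bar B_{R'}(x_0)$, so $u_{R',x_0}^{(out)}$ is a smooth, positive, bounded function in an open neighborhood of $\bar\Omega$ and satisfies $f(\lambda(-A^{u_{R',x_0}^{(out)}})) = 1$ there by the canonical-solution construction recalled just above. In particular it is a (classical and therefore viscosity) sub-solution of \eqref{Eq:X1} on $\Omega$.

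For the boundary comparison, recall that $w \in \mathrm{LSC}(\Omega)$ and the boundary condition \eqref{Eq:X1BC} says $w(x) \to +\infty$ as $x \to \partial\Omega$. Extending $w$ by $+\infty$ on $\partial\Omega$ produces a function in $\mathrm{LSC}(\bar\Omega)$, and since $u_{R',x_0}^{(out)}$ is bounded on $\bar\Omega$ we trivially get $w \geq u_{R',x_0}^{(out)}$ on $\partial\Omega$. Proposition~\ref{Prop:CP} then yields $w \geq u_{R',x_0}^{(out)}$ on $\Omega$.

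For any $x \in \Omega$, the openness of $\Omega$ together with $B_R(x_0) \cap \bar\Omega = \emptyset$ forces $|x - x_0| > R$, so the explicit formula gives $u_{R',x_0}^{(out)}(x) \to u_{R,x_0}^{(out)}(x) \in (0,\infty)$ as $R' \uparrow R$. Sending $R' \uparrow R$ in the inequality from the previous step gives the desired bound $w \geq u_{R,x_0}^{(out)} > 0$ in $\Omega$. The only real obstacle is the one described in the first paragraph, namely the possible tangency of $\partial B_R(x_0)$ to $\partial\Omega$, which is exactly what the shrinking-and-limiting device handles; no finer information about $\partial\Omega$ is needed.
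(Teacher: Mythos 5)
Your proof is correct and is essentially the same route as the paper's: the paper's entire proof is the one line ``The conclusion is an immediate consequence of the comparison principle.'' The shrinking-radius device you add to handle possible tangency of $\partial B_R(x_0)$ with $\partial\Omega$ — where both $w$ and $u_{R,x_0}^{(out)}$ blow up simultaneously and the boundary comparison in Proposition~\ref{Prop:CP} becomes delicate (its proof requires a constant $c>1$ with $c\,w\ge v$ globally, which could fail if $v$ blew up faster than $w$) — is a legitimate refinement that makes rigorous what the paper leaves implicit.
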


\begin{proof}
The conclusion is an immediate consequence of the comparison principle.
\end{proof}

\begin{lem} \label{Lem:LowBnd}
Let $\Omega$ be a bounded domain in $\RR^n$, $n \geq 3$, and $(f,\Gamma)$ satisfy \eqref{02}-\eqref{f-homo}. Assume that $\partial\Omega$ is a smooth hypersurface. There exists some $c_0 = c_0(\Omega,f,\Gamma) > 0$ such that if $0 < w \in LSC(\Omega)$ is a super-solution to \eqref{Eq:X1} and that $w \geq c \geq c_0$ on $\partial\Omega$ for some constant $c$, then
\[
w(x) \geq  \frac{\alpha}{2^{\frac{n-2}{2}} [ (2 + \alpha^{-\frac{2}{n-2}} c_0^{\frac{2}{n-2}}\textrm{dist}(x,\partial\Omega) ) \textrm{dist}(x,\partial\Omega) + \alpha^{\frac{2}{n-2}}c^{-\frac{2}{n-2}}]^{\frac{n-2}{2}}},
\]
where $\alpha$ is the constant in the expression for the canonical solution on balls.
\end{lem}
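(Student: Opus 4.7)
The plan is to apply the comparison principle (Proposition~\ref{Prop:CP}) with the super-solution $w$ and a classical sub-solution $\phi$ obtained as a carefully shifted canonical exterior solution $u^{(out)}_{R,\tilde x_0}$, with $R$ and $\tilde x_0$ tuned both to the boundary value $c$ and to the geometry of $\partial\Omega$. The hypothesis that $\partial\Omega$ is smooth and compact gives the uniform exterior ball condition: there exists $R_0=R_0(\Omega)>0$ such that for every $y\in\partial\Omega$ the open ball $B(y+R_0\nu(y),R_0)$ lies in $\RR^n\setminus\overline\Omega$, where $\nu(y)$ denotes the outward unit normal at $y$. The constant $c_0$ will be chosen from $R_0$, $n$ and $\alpha$ so that the forthcoming geometric constraints are satisfied.

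Given $c\geq c_0$, introduce the quantities $a=\alpha^{-2/(n-2)}c_0^{2/(n-2)}$ and $b=\alpha^{2/(n-2)}c^{-2/(n-2)}$ appearing in the claim, and set
\[
R=\frac{1}{2a},\qquad L=\sqrt{R^2+bR}.
\]
Since $c\geq c_0$ gives $b\leq 1/a=2R$, we have $L\leq R\sqrt 3$. The constraint $L\leq R_0$ (needed below) is then equivalent to $R\leq R_0/\sqrt 3$, i.e.\ $a\geq \sqrt 3/(2R_0)$, so I define $c_0:=\alpha\bigl(\sqrt 3/(2R_0)\bigr)^{(n-2)/2}$. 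With this choice, $R<L\leq R_0$.

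For the given $x\in\Omega$, pick a closest boundary point $y\in\partial\Omega$, so $x=y-d\nu(y)$ with $d=\textrm{dist}(x,\partial\Omega)$, and set $\tilde x_0=y+L\nu(y)$. A triangle inequality using $R<L\leq R_0$ shows $\overline{B(\tilde x_0,R)}\subset B(y+R_0\nu(y),R_0)\subset\RR^n\setminus\overline\Omega$, so $\phi:=u^{(out)}_{R,\tilde x_0}$ is a smooth classical solution of $f(\lambda(-A^\phi))=1$ on a neighborhood of $\overline\Omega$, hence a sub-solution in $\Omega$. Moreover, again by a triangle inequality against the exterior ball centered at $y+R_0\nu(y)$, one has $|z-\tilde x_0|\geq L$ for all $z\in\overline\Omega$, with equality at $z=y$. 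The definition of $L$ is precisely what is needed so that $\phi(y)=\alpha/b^{(n-2)/2}=c$; thus $\phi\leq c\leq w$ on $\partial\Omega$, and Proposition~\ref{Prop:CP} yields $w\geq\phi$ in $\Omega$.

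Finally, $|x-\tilde x_0|=L+d$, so
\[
(L+d)^2-R^2=(L^2-R^2)+2Ld+d^2=bR+2Ld+d^2.
\]
Using $L\leq 2R$ and the relation $2aR=1$ (the point of choosing $R=1/(2a)$, which lets one write $d^2=2aRd^2$), this is bounded by $R(b+4d+2ad^2)\leq 2R[(2+ad)d+b]$, the last step being the trivial $b\leq 2b$; this gives the claimed lower bound for $\phi(x)=\alpha R^{(n-2)/2}/((L+d)^2-R^2)^{(n-2)/2}$. The only subtle point is the simultaneous choice of $R$ and $L$: $L=\sqrt{R^2+bR}$ is forced by wanting $\phi$ to equal $c$ at $y$, and then $R=1/(2a)$ is chosen so that both $L\leq 2R$ holds and the $d^2$-term pairs cleanly with $2aRd^2$ in the final estimate; everything else is routine algebra plus one application of the comparison principle.
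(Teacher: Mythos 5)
Your proof is correct and uses the same overall strategy as the paper: construct a canonical exterior solution $u^{(out)}$ via the uniform exterior ball condition, arrange for its values on $\partial\Omega$ to lie below $c$, apply the comparison principle (Proposition~\ref{Prop:CP}), and then manipulate algebraically. The only difference is in how the comparison function is built. The paper keeps the center at $\pi(x)+R_0\nu(\pi(x))$ (the center of the tangent exterior ball) and shrinks the radius of the canonical solution from $R_0$ to $R_0-\epsilon$, with $\epsilon=\tfrac12\alpha^{2/(n-2)}c^{-2/(n-2)}$ chosen so that $u^{(out)}_{R_0-\epsilon,\cdot}\le c$ on $\partial B_{R_0}$ and $c_0=\alpha R_0^{-(n-2)/2}$ ensuring $\epsilon\le R_0/2$. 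You instead fix the radius $R=R_0/\sqrt3$ (depending on $c_0$ only) and translate the center to $\pi(x)+L\nu(\pi(x))$ with $L=\sqrt{R^2+bR}$ chosen so that the comparison function is exactly equal to $c$ at $\pi(x)$; the inequality $b\le2R$ (from $c\ge c_0$) gives $L\le R\sqrt3\le R_0$, which is what keeps $\overline{B(\tilde x_0,R)}$ inside the exterior tangent ball. Both parametrizations yield precisely the claimed algebraic bound after routine estimates; yours touches the value $c$ exactly at the foot of the normal rather than dominating it strictly, and happens to give a slightly smaller (hence slightly stronger) admissible $c_0$, but as the lemma only asserts the existence of some $c_0$ this makes no difference to the statement.
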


\begin{proof}
Fix some $R_0 > 0$ such that, for every $\xi \in \partial\Omega$, there is some $x_0 = x_0(\xi) \in \RR^n$ such that $\bar B_{R_0}(x_0(\xi)) \cap \bar\Omega = \{\xi\}$.

We choose $c_0$ such that $\alpha^{\frac{2}{n-2}}c_0^{-\frac{2}{n-2}} = R_0$. Then, for $\epsilon = \frac{1}{2}\,\alpha^{\frac{2}{n-2}}c^{-\frac{2}{n-2}}  \leq \frac{1}{2}R_0$, there holds
\[
u_{R_0 - \epsilon, y}^{(out)}
	\leq \alpha\Big(\frac{R_0 - \epsilon}{2R_0 \epsilon - \epsilon^2}\Big)^{\frac{n-2}{2}}
	\leq \alpha\Big(\frac{1}{2\epsilon}\Big)^{\frac{n-2}{2}} =  c \text{ in } \RR^n \setminus B_{R_0}(y) \text{ for all } y \in \RR^n.
\]
Thus, by the comparison principle, we have
\[
w \geq u_{R_0 - \epsilon_0, x_0(\xi)}^{(out)}  \text{ in } \Omega \text{ for all } \xi \in \partial\Omega.
\]

Now, for any $x \in \Omega$, select a $\pi(x) \in \partial \Omega$ satisfying $|x - \pi(x)| = \textrm{dist}(x,\partial\Omega)$. We then have
\begin{align*}
w(x)
	&\geq u_{R_0 - \epsilon, x_0(\pi(x))}^{(out)}
	= \alpha\Big(\frac{R_0 - \epsilon}{(|x - \pi(x)| + R_0)^2 - (R_0 - \epsilon)^2}\Big)^{\frac{n-2}{2}}\\
	&= \alpha\Big(\frac{R_0 - \epsilon}{|x - \pi(x)|^2 + 2R_0(|x - \pi(x)|  + \epsilon) - \epsilon^2}\Big)^{\frac{n-2}{2}}\\
	&\geq \alpha\Big(\frac{R_0/2}{|x - \pi(x)|^2 + 2R_0(|x - \pi(x)|  + \epsilon) }\Big)^{\frac{n-2}{2}}.
\end{align*}
The assertion follows.
\end{proof}

The next lemma gives the boundary behavior of solutions.

\begin{lem}\label{Lem:BBehavior}
Let $\Omega$ be a bounded domain in $\RR^n$, $n \geq 3$, and $(f,\Gamma)$ satisfy \eqref{02}-\eqref{f-homo}. Assume that $\partial\Omega$ is a smooth hypersurface and $u$ is a solution to \eqref{Eq:X1}-\eqref{Eq:X1BC}. Then
\[
\lim_{\textrm{dist}(x,\partial\Omega)  \rightarrow 0} \textrm{dist}(x,\partial\Omega)^{\frac{n-2}{2}} u(x) = \alpha\,2^{-\frac{n-2}{2}},
\]
where $\alpha$ is the constant in the expression for the canonical solution on balls.
\end{lem}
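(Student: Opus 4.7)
The plan is to sandwich $u$ between canonical interior- and exterior-ball solutions of the same radius $R_0$, where $R_0>0$ is chosen so that $\partial\Omega$ satisfies a uniform two-sided tangent ball condition (which exists since $\partial\Omega$ is a smooth compact hypersurface). Shrinking a constant $d_0>0$ if necessary, every $x\in\Omega$ with $d:=\textrm{dist}(x,\partial\Omega)<d_0$ has a unique nearest point $\xi=\pi(x)\in\partial\Omega$ and lies on the inward normal at $\xi$. Writing $\nu$ for the outward unit normal at $\xi$, choose $y_\xi:=\xi-R_0\nu\in\Omega$ and $z_\xi:=\xi+R_0\nu\in\RR^n\setminus\bar\Omega$ as the centers of the interior and exterior tangent balls. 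One checks immediately that $|x-y_\xi|=R_0-d$ and $|x-z_\xi|=R_0+d$.

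For the upper bound, I would fix any $R''\in(R_0-d,R_0)$, so that $x\in B_{R''}(y_\xi)\Subset B_{R_0}(y_\xi)\subset\Omega$. On $\bar B_{R''}(y_\xi)$ the solution $u$ is continuous and finite, whereas $u^{(in)}_{R'',y_\xi}\equiv+\infty$ on $\partial B_{R''}(y_\xi)$. Proposition \ref{Prop:CP} then gives $u\le u^{(in)}_{R'',y_\xi}$ in $B_{R''}(y_\xi)$; sending $R''\nearrow R_0$ yields
\[
u(x)\le u^{(in)}_{R_0,y_\xi}(x)=\alpha\left(\frac{R_0}{d(2R_0-d)}\right)^{(n-2)/2},
\]
whence $d^{(n-2)/2}u(x)\le\alpha\bigl(R_0/(2R_0-d)\bigr)^{(n-2)/2}\to\alpha\,2^{-(n-2)/2}$ as $d\to0^+$.

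For the lower bound, I would fix $R'<R_0$; then $B_{R'}(z_\xi)\Subset\RR^n\setminus\bar\Omega$, so $u^{(out)}_{R',z_\xi}$ is continuous and bounded on $\bar\Omega$. Since $u\to+\infty$ on $\partial\Omega$ while $u^{(out)}_{R',z_\xi}$ stays bounded there, Proposition \ref{Prop:CP} gives $u\ge u^{(out)}_{R',z_\xi}$ in $\Omega$. Letting $R'\nearrow R_0$ produces
\[
u(x)\ge u^{(out)}_{R_0,z_\xi}(x)=\alpha\left(\frac{R_0}{d(d+2R_0)}\right)^{(n-2)/2},
\]
so $d^{(n-2)/2}u(x)\ge\alpha\bigl(R_0/(d+2R_0)\bigr)^{(n-2)/2}\to\alpha\,2^{-(n-2)/2}$. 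The estimates are uniform in $\xi\in\partial\Omega$ since $R_0$ is uniform, which gives the asserted limit.

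The principal technical nuisance is the application of the comparison principle at points where one of the functions blows up. In the upper bound, $u^{(in)}_{R_0,y_\xi}=+\infty$ on $\partial B_{R_0}(y_\xi)$; this is circumvented by the approximation $R''<R_0$, for which the boundary inequality on $\partial B_{R''}\Subset\Omega$ is automatic, followed by monotone passage to the limit. In the lower bound, the blow-up $u=+\infty$ on $\partial\Omega$ trivially dominates the bounded $u^{(out)}_{R',z_\xi}$, so Proposition \ref{Prop:CP} applies directly. This approximation device is precisely the one used just above in the proof of uniqueness for canonical ball solutions.
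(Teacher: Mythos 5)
Your proof is correct and follows the same approach as the paper: sandwich $u$ between the canonical interior- and exterior-ball solutions at the two-sided tangent balls of uniform radius $R_0$, compute the resulting bounds in terms of $d=\mathrm{dist}(x,\partial\Omega)$, and let $d\to 0$. The only (minor) difference is that you make explicit the approximation by $R''\nearrow R_0$ and $R'\nearrow R_0$ to keep the comparison principle away from points where both sides are $+\infty$, whereas the paper applies the comparison directly with radius $R_0$ and relies on this being implicitly justified (it invokes the same approximation device in the preceding lemma on the ball); your version is slightly more careful but otherwise identical in substance.
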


\begin{proof}

For $\xi \in \partial \Omega$ and let $\nu(\xi)$ be the outward unit normal to $\partial\Omega$ at $\xi$. Fix some $R > 0$ such that $\bar B_R(\xi + R\nu(\xi)) \cap \bar\Omega = \{\xi\}$ and $\bar B_R(\xi - R\nu(\xi)) \cap (\RR^n \setminus\Omega) = \{\xi\}$ for all $\xi \in \partial\Omega$. By the comparison principle,
\[
u \geq u_{R,\xi + R\nu(\xi)}^{(out)}\text{ in } \Omega \text{ for all } \xi \in \partial\Omega.
\]

For $x$ close enough to $\partial\Omega$, let $\pi(x) \in \partial \Omega$ be the orthogonal projection of $x$ onto $\partial\Omega$. Then
\begin{align*}
\textrm{dist}(x,\partial\Omega)^{\frac{n-2}{2}} u(x)
	&= |x - \pi(x)|^{\frac{n-2}{2}} u(x)\\
	&\geq |x - \pi(x)|^{\frac{n-2}{2}} u_{R,\pi(x) + R\nu(\pi(x))}^{(out)}(x)\\
	&= \alpha \Big(\frac{R|x - \pi(x)|}{|x - \pi(x) - R\nu(\pi(x)|^2 - R^2}\Big)^{\frac{n-2}{2}}\\
	&= \alpha\Big(\frac{R|x - \pi(x)|}{(|x - \pi(x)| + R)^2 - R^2}\Big)^{\frac{n-2}{2}}\\
	&= \alpha\Big(\frac{R}{2R + |x - \pi(x)| }\Big)^{\frac{n-2}{2}}\\
	&= \alpha\Big(\frac{R}{2R + \textrm{dist}(x,\partial\Omega)}\Big)^{\frac{n-2}{2}}.
\end{align*}
It follows that
\[
\liminf_{x \rightarrow \xi} \textrm{dist}(x,\partial\Omega)^{\frac{n-2}{2}} u(x)  \geq \alpha\,2^{-\frac{n-2}{2}}.
\]

Similarly, we have
\[
u \leq u_{R,\xi - R\nu(\xi)}^{(in)}\text{ in } B_R(\xi - R\nu(\xi)) \text{ for all } \xi \in \partial\Omega,
\]
which implies
\begin{align*}
\textrm{dist}(x,\partial\Omega)^{\frac{n-2}{2}} u(x)
	&\leq |x - \pi(x)|^{\frac{n-2}{2}} u_{R,\pi(x) - R\nu(\pi(x))}^{(in)}(x)\\
	&= \alpha\Big(\frac{R}{2R - \textrm{dist}(x,\partial\Omega)}\Big)^{\frac{n-2}{2}},
\end{align*}
and so
\[
\limsup_{x \rightarrow \xi} \textrm{dist}(x,\partial\Omega)^{\frac{n-2}{2}} u(x)  \leq \alpha\,2^{-\frac{n-2}{2}}.
\]
The assertion follows.
\end{proof}

\subsection{Interior gradient estimates}

\begin{thm}\label{thm:Lip} Let $\Omega$ be a domain in $\RR^n$, $n \geq 3$, and $(f,\Gamma)$ satisfy \eqref{02}-\eqref{f-homo}. If $u$ is a continuous viscosity solution of \eqref{Eq:X1} in $\Omega$ satisfying $0 < a \leq u \leq b < +\infty$ for some constants $a, b$, then $u$ is locally Lipschitz in $\Omega$. Furthermore, there holds $\frac{|\nabla u|}{u} \leq C(\frac{b}{a},n,\Omega',\Omega)$ in $\Omega'$ for any $\Omega' \Subset \Omega$ .
\label{thm:regularity}
\end{thm}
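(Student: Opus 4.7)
The plan is to use the comparison principle (Proposition~2.3) with off-centered canonical solutions $u^{(in)}_{R,y}$ of Section~2.3 as upper barriers tangent to $u$ at each interior point, and to upgrade the resulting one-sided bound to a two-sided Lipschitz estimate via a swap argument.

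\textbf{Setup.} Fix $\Omega' \Subset \Omega$ and put $d := \textrm{dist}(\Omega', \partial\Omega) > 0$. For $x_0 \in \Omega'$, set $R_0 := (\alpha/u(x_0))^{2/(n-2)}$. The hypothesis $a \le u \le b$ confines $R_0$ to a compact interval $[R_b, R_a]$ with $R_s := (\alpha/s)^{2/(n-2)}$, and Lemma~3.1 forces $R_0 \ge \textrm{dist}(x_0,\partial\Omega) \ge d$. The tangency $u^{(in)}_{R,y}(x_0) = u(x_0)$ is equivalent to $|y-x_0|^2 = R(R-R_0)$, which requires $R \ge R_0$.

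\textbf{Barrier in each direction.} For each $e \in \SSphere^{n-1}$, I will choose $R \ge R_0$ and $y = x_0 - \sqrt{R(R-R_0)}\,e$ so that $B_R(y) \subset \Omega$; the hypothesis $\Omega' \Subset \Omega$ provides such a pair $(R, y)$ with $R$ bounded in $n$, $b/a$, $d$, and the geometry of $\Omega$, and with the distance from $x_0$ to $\partial B_R(y)$ bounded below by some $\rho_0 > 0$. Proposition~2.3 then gives $u \le u^{(in)}_{R,y}$ on $B_R(y)$ with equality at $x_0$. A direct computation from the explicit formula yields
\[
\nabla u^{(in)}_{R,y}(x_0) = (n-2)\,\frac{u(x_0)}{R_0}\sqrt{1 - \tfrac{R_0}{R}}\,e,
\]
so $|\nabla u^{(in)}_{R,y}(x_0)|/u(x_0) \le (n-2)/R_0 \le (n-2)/d$. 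Taylor expansion at $x_0$ together with Cauchy--Schwarz then yields
\[
u(x_0 + re) - u(x_0) \le u^{(in)}_{R,y}(x_0 + re) - u^{(in)}_{R,y}(x_0) \le \tfrac{n-2}{d}\,u(x_0)\,r + O(r^2)
\]
for $0 \le r < \rho_0$; placing $y$ on the opposite side of $x_0$ gives the analogous bound in the direction $-e$.

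\textbf{Swap to the two-sided estimate.} Applying the one-sided bound at $x = x_0 + re$ (with $x$ in the role of the base point) gives $u(x_0) - u(x) \le \tfrac{n-2}{d}\,u(x)\,r + O(r^2)$. Combining the two bounds and using $u \in [a,b]$,
\[
|u(x) - u(x_0)| \le \tfrac{n-2}{d}\,b\,|x - x_0| + O(|x-x_0|^2), \qquad |x - x_0| < \rho_0,
\]
so $u$ is locally Lipschitz. Dividing by $u \ge a$ delivers the scale-invariant bound $|\nabla u|/u \le (n-2)(b/a)/d + \text{correction} = C(n, b/a, \Omega', \Omega)$ a.e.\ in $\Omega'$, as claimed. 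For $|x - x_0| \ge \rho_0$, the Lipschitz estimate is trivial from the uniform bound $u \le b$.

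\textbf{Main obstacle.} The crux is the barrier construction: since Lemma~3.1 forces $R_0 \ge \textrm{dist}(x_0, \partial\Omega)$, the ball $B_R(y)$ cannot be centered at $x_0$, and one must use off-centered barriers with $y$ shifted by $t = \sqrt{R(R-R_0)}$ into $\Omega$. Ensuring the fit $B_R(y) \subset \Omega$ for \emph{every} direction $e$ and every $x_0 \in \Omega'$ requires the quantitative geometric input $\Omega' \Subset \Omega$ (and the resulting constant $C$ reflects the shape of $\Omega$). The explicit Taylor expansion for $\nabla u^{(in)}_{R,y}$ is routine but is what fixes the correct scaling $|\nabla u^{(in)}_{R,y}(x_0)| \sim u(x_0)/R_0$ and hence the scale-invariant form of the final gradient estimate.
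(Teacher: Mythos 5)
The central step of your construction cannot be carried out: the tangent barrier ball $\bar B_R(y)$ does not, in general, fit inside $\Omega$, so the comparison principle cannot be invoked. Indeed, Lemma~\ref{Lem:UpBnd} forces $\textrm{dist}(x_0,\partial\Omega) \le R_0$ (you note this yourself), while the tangency condition $u^{(in)}_{R,y}(x_0)=u(x_0)$ forces $R\ge R_0$. Since $x_0\in B_R(y)$, one has $B_R(y)\subset B_{R+|x_0-y|}(x_0)$, and more tellingly $|y|$-triangle inequalities give the \emph{necessary} condition $R-\sqrt{R(R-R_0)}<\textrm{dist}(x_0,\partial\Omega)$. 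A short computation shows $R-\sqrt{R(R-R_0)}=\frac{RR_0}{R+\sqrt{R(R-R_0)}}$, which decreases from $R_0$ (at $R=R_0$) to $R_0/2$ (as $R\to\infty$); so the necessary condition already fails whenever $\textrm{dist}(x_0,\partial\Omega)\le R_0/2$, and even when $\textrm{dist}(x_0,\partial\Omega)\in(R_0/2,R_0]$ the necessary condition is far from sufficient (the ball must clear $\partial\Omega$ in \emph{every} direction, not just along the segment from $x_0$ to $y$). Concretely, take $\Omega=B_{9/10}(0)$ and $u=u^{(in)}_{1,0}\vert_{\Omega}$, which is a solution on $\Omega$ with $0<a\le u\le b$. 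At $x_0=(1/2,0,\ldots,0)$ one has $R_0=3/4$, and a check of $|y|+R<9/10$ with $|y-x_0|=\sqrt{R(R-3/4)}$, $R\ge 3/4$ shows no admissible $(R,y)$ exists in any direction $e$. So the assertion that ``$\Omega'\Subset\Omega$ provides such a pair $(R,y)$'' is false, and the rest of the argument collapses with it. This is not an oversight you can patch by shrinking $\Omega'$: the obstruction scales with $R_0$, and $R_0\ge \textrm{dist}(x_0,\partial\Omega)$ is forced by the very barrier (the inscribed ball) used to prove Lemma~\ref{Lem:UpBnd}, so the tangent barrier at the correct value of $u(x_0)$ is always at least as large as any ball that could fit around $x_0$.

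Put differently, if the tangent ball $\bar B_R(y)\subset\Omega$ were available, then when $u$ is differentiable at $x_0$ the touching from above would pin down $\nabla u(x_0)$ exactly to $\nabla u^{(in)}_{R,y}(x_0)$; by taking $R=R_0$ (centered ball), which requires $B_{R_0}(x_0)\subset\Omega$, this would force $\nabla u(x_0)=0$ at every point, which is clearly absurd unless $u$ is the canonical ball solution with $R_0=\textrm{dist}(x_0,\partial\Omega)$ exactly.

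The paper proves the theorem by a different mechanism: for each $x$ in the half-ball and small $\lambda$, the Kelvin transform $u_{x,\lambda}$ is again a viscosity solution by conformal invariance, and comparison on $B(0,3/4)\setminus\overline{B(x,\lambda)}$ gives $u_{x,\lambda}\le u$ there. By \cite[Lemma~2]{LiNg-arxiv}, this moving-spheres inequality (for all $x$ in a compact set and all $\lambda$ up to a uniform scale) is equivalent to a Lipschitz bound with the scale-invariant form $|\nabla u|/u\le C$. Conformal invariance is essential: it replaces the (unavailable) exterior tangent barrier by the Kelvin-reflected solution itself, which fits by construction since the reflection acts within $\Omega$. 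If you want a barrier-style argument to compute the constant, you would need barriers that are not of the canonical form and whose level sets fit in $\Omega$, but the paper's route is both shorter and gives the sharp scaling directly.
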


\begin{proof} The proof is identical to that of \cite[Theorem 1.1]{LiNgWang}, which we reproduce here for readers' convenience. Without loss of generality, we may assume that $\Omega=B(0,1)$ and we only need to prove that $u$ is Lipschitz continuous on $B(0,\frac{1}{2})$.

For any $x\in\overline{B(0,\frac{1}{2})}$, $0<\lambda\leq R:=\frac{1}{4}\left[\frac{\sup\limits_{B(0,\frac{3}{4})}u}{\inf\limits_{B(0,\frac{3}{4})}u}\right]^{-\frac{1}{n-2}}$, we define $u_{x,\lambda}$, the Kelvin transform of $u$, as
\begin{equation*}
u_{x,\lambda}(y):=\frac{\lambda^{n-2}}{|y-x|^{n-2}}u\Big(x+\frac{\lambda^{2}(y-x)}{|y-x|^{2}}\Big),
\quad\forall y\in\overline{B\big(0,\tfrac{3}{4}\big)\setminus B(x,\lambda)}.
\end{equation*}
For any $y\in\partial B(0,\frac{3}{4})$, we have
\begin{equation*}
u_{x,\lambda}(y)\leq(4R)^{n-2}\sup\limits_{B(0,\frac{3}{4})}u=\inf\limits_{B(0,\frac{3}{4})}u\leq u(y).
\end{equation*}
By conformal invariance,
\begin{equation*}
f(\lambda(-A^{u_{x,\lambda}})) = 1 \quad\mbox{ in }B\big(0,\tfrac{3}{4}\big)\setminus \overline{B(x,\lambda)},\quad\mbox{ in the viscosity sense}.
\end{equation*}
Since $u_{x,\lambda}=u$ on $\partial B(x,\lambda)$, the comparison principle gives
\begin{equation*}
u_{x,\lambda}\leq u\mbox{ in }B\big(0,\tfrac{3}{4}\big)\setminus \overline{B(x,\lambda)}\mbox{ for any }0<\lambda\leq R,x\in\overline{B\big(0,\tfrac{1}{2}\big)}.\label{lashi}
\end{equation*}

By \cite[Lemma 2]{LiNg-arxiv}, \eqref{lashi} implies that $u$ is Lipschitz continuous on $\overline{B(0,\frac{1}{2})}$. This concludes the proof.
\end{proof}

\subsection{Solutions on bounded smooth domains}

In this subsection, we assume that $\Omega$ is a bounded domain whose boundary $\partial\Omega$ is a smooth compact hypersurface with a finite number of connected components.

\begin{lem}\label{Lem:BCc}
Let $n \geq 3$ and $(f,\Gamma)$ satisfy \eqref{02}-\eqref{f-homo}. Let $\Omega \subset \RR^n$ be a bounded domain with smooth boundary $\partial \Omega$. For $c \in (0,\infty)$, there is a unique continuous viscosity solution to \eqref{Eq:X1} subjected to the boundary condition $u(x) = c$ on $\partial \Omega$.
\end{lem}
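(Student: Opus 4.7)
The plan is to apply Perron's method in the viscosity framework, with uniqueness following directly from the comparison principle. Uniqueness is immediate: if $u_1, u_2$ are continuous viscosity solutions agreeing with $c$ on $\partial\Omega$, each is simultaneously a sub- and super-solution, so two applications of Proposition~\ref{Prop:CP} force $u_1 \equiv u_2$.

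For existence, I would first fix global barriers of the right magnitude. The constant $\bar u \equiv c$ is a (trivial) super-solution since $A^{\bar u} \equiv 0$ yields $\lambda(-A^{\bar u}) = 0 \in \partial\Gamma$ and $f(0) = 0 \le 1$. For a sub-solution, I would take a ball $B_R(x_0) \supset \bar\Omega$ with $R$ so large that $\underline u := u_{R,x_0}^{(in)}$ satisfies $\underline u \le c$ throughout $\bar\Omega$ (possible because $u_{R,x_0}^{(in)}$ converges uniformly to $0$ on $\bar\Omega$ as $R \to \infty$). With these in hand I would define the Perron envelope
\[
  U(x) := \sup\{v(x) : v \in \mathrm{USC}(\bar\Omega),\ v \text{ is a sub-solution of \eqref{Eq:X1} in } \Omega,\ v \le c \text{ on } \partial\Omega\},
\]
so that Proposition~\ref{Prop:CP} applied against $\bar u$ pins $\underline u \le U \le c$. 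The standard Ishii-type envelope argument (cf.\ \cite{Ishii89-CPAM}) then yields that $U^*$ is a sub-solution in $\Omega$, and a local bump near any hypothetical failure point of $U_*$ would produce a strictly larger admissible competitor, forcing $U_*$ to be a super-solution; Proposition~\ref{Prop:CP} once more gives $U^* \le U_*$, so $U = U^* = U_*$ is a continuous viscosity solution of \eqref{Eq:X1} in $\Omega$.

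What remains is to attain the boundary value $U = c$ on $\partial\Omega$. The bound $\limsup_{x \to \xi} U(x) \le c$ is immediate from $U \le c$. For the matching lower bound I would exploit the exterior ball condition from smoothness of $\partial\Omega$: fix $\xi \in \partial\Omega$, let $y_0 = \xi + R\nu(\xi)$ with $\bar B_R(y_0) \cap \bar\Omega = \{\xi\}$, and choose $R' \in (0,R)$ solving
\[
  \alpha \bigl(\tfrac{R'}{R^2 - R'^2}\bigr)^{\frac{n-2}{2}} = c,
\]
which exists uniquely since the left hand side sweeps $(0,+\infty)$ continuously as $R'$ ranges over $(0,R)$. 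Then $v_\xi := u_{R',y_0}^{(out)}$ is a classical solution in $\RR^n \setminus \bar B_{R'}(y_0) \supset \bar\Omega$, and $|y - y_0| \ge R$ on $\bar\Omega$ with equality only at $\xi$ gives $v_\xi \le c$ on $\bar\Omega$ and $v_\xi(\xi) = c$. Hence $v_\xi$ is admissible in the Perron class, so $U \ge v_\xi$ in $\bar\Omega$ and $\liminf_{x \to \xi} U(x) \ge c$.

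The hard part will be the super-solution property of $U_*$: the bump/perturbation argument is technical but standard once Proposition~\ref{Prop:CP} is available, as the comparison principle already encodes the degenerate ellipticity and properness of the equation. The only ingredient truly specific to the present setting is the exterior-ball lower barrier built from $u^{(out)}$, for which smoothness of $\partial\Omega$ is essential.
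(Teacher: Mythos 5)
Your proposal is correct and follows essentially the same route as the paper: uniqueness via the comparison principle (Proposition~\ref{Prop:CP}), and existence via Perron's method with the constant $c$ as global super-solution and the canonical exterior-ball functions $u_{R',\,\xi + R\nu(\xi)}^{(out)}$, normalized to equal $c$ at $\xi$, as boundary barriers pinning the value $c$. The only differences are presentational: the paper packages your family $\{v_\xi\}$ into a single explicit Lipschitz sub-solution $\underline u = \sup_\xi \omega^{(\xi)}$ and then invokes the Perron machinery already set up in \cite[Section 4 and Remark 4.2]{LiNgWang}, whereas you reconstruct the envelope $U$ and the $U^*$/$U_*$ argument inline (note that you should verify the boundary equalities $\limsup U^* \le c \le \liminf U_*$ \emph{before} invoking Proposition~\ref{Prop:CP} to conclude $U^*\le U_*$, since the comparison hypothesis requires it); the paper's uniqueness step first records the scale-invariance of sub/super-solutions, but your direct double application of the comparison principle suffices.
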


\begin{proof} The uniqueness is a consequence of the comparison principle and the following two facts:
\begin{itemize}
\item If $w$ is a super-solution of \eqref{Eq:X1} and $t \geq 1$, then $tw$ is also a super-solution of \eqref{Eq:X1}.
\item If $v$ is a sub-solution of \eqref{Eq:X1} and $0 < t \leq 1$, then $tv$ is also a sub-solution of \eqref{Eq:X1}.
\end{itemize}

We prove the existence by Perron's method. By the smoothness of $\partial\Omega$, there exists some $R_0 > 0$ such that, for any $\xi \in \partial \Omega$, there is some $x_0 = x_0(\xi) \in \RR^n \setminus \Omega$ such that $\bar B_{R_0}(x_0) \cap \bar \Omega = \{\xi\}$. Also, there exists $\epsilon_0 = \epsilon_0(R_0,c) \in (0,R_0)$ such that
\[
u_{R_0 - \epsilon_0,y}^{(out)} = c \text{ on } \partial B_{R_0}(y) \text{ for all } y \in \RR^n.
\]
It follows that the functions $\omega^{(\xi)} := u_{R_0 - \epsilon_0,x_0(\xi)}^{(out)}$ belong to $C^\infty(\bar\Omega)$, satisfy
\[
\sigma_k(\lambda(-A^{\omega^{(\xi)}})) = 1 \text{ in }\Omega,
\]
and
\[
\omega^{(\xi)}(\xi) = c \text{ and } \omega^{(\xi)} < c \text{ in } \bar\Omega \setminus \{\xi\}.
\]

For $x \in \bar\Omega$, define
\begin{align}
\underline{u}(x)
	&= \sup\Big\{\omega^{(\xi)}(x): \xi \in \partial\Omega\Big\} \leq c.
\end{align}
It is clear that $\underline{u} \equiv c$ on $\partial\Omega$.

Let
\[
K = \sup_{\xi \in \partial\Omega} \sup_{\bar\Omega} |\nabla \omega^{(\xi)}|.
\]
For any $x, y \in \bar\Omega$ and $\xi \in \partial\Omega$, we have
$$\omega^{(\xi)}(x) \leq \omega^{(\xi)}(y) + K|x - y| \leq \underline{u}(y) + K|x - y|,
$$
which implies that $\underline{u}(x) \leq \underline{u}(y) + K|x - y|$. This shows that $\underline{u}$ is Lipschitz continuous in $\bar\Omega$. By a standard argument (see \cite{UserGuide} and \cite[Section 4]{LiNgWang}), $\underline{u}$ is a viscosity subsolution of \eqref{Eq:X1}.

On the other hand, the constant function $c$ is a supersolution to \eqref{Eq:X1}.

By Perron's method (see \cite[Section 4 and Remark 4.2]{LiNgWang}), \eqref{Eq:X1} has a unique solution satisfying $u \equiv c$ on $\partial\Omega$.
\end{proof}

\medskip

\begin{proof}[Proof of Theorem \ref{Thm:SmoothDom}]
 Uniqueness is a consequence of the comparison principle and the boundary estimate Lemma \ref{Lem:BBehavior}.

We turn to existence. Let $u_c$ be the solution to \eqref{Eq:X1} satisfying $u_c \equiv c$ on $\partial \Omega$ constructed in Lemma \ref{Lem:BCc}.

By the comparison principle, $u_c$ is monotonically increasing in $c$. By Lemma \ref{Lem:UpBnd}, $u_c$ is uniformly bounded from above. Let
\[
u(x) = \lim_{c \rightarrow \infty} u_c(x) \text{ for } x \in \Omega.
\]
By Lemma \ref{Lem:LowBnd}, we have
\[
u(x) \geq \frac{1}{C}\,\textrm{dist}(x,\partial\Omega)^{-\frac{n-2}{2}}
\]
for some positive constant $C$ and for all $x$ sufficiently close to $\partial\Omega$. In particular, \eqref{Eq:X1BC} is satisfied.

By Lemmas \ref{Lem:UpBnd}, \ref{Lem:LowBnd} and the Lipschitz regularity result Theorem \ref{thm:Lip}, for any $\Omega' \Subset \Omega$, there is some $c_0 > 0$ and $C > 0$such that
\[
|\nabla u_c| \leq C \text{ in } \Omega' \text{ for all } c > c_0.
\]
It follows that $u_c$ converges uniformly on compact subsets of $\Omega$ to $u \in C^{0,1}_{loc}(\Omega)$. By a standard argument, $u$ is a viscosity solution of \eqref{Eq:X1}.  The blow-up rate of $u$ near $\partial\Omega$ follows from Lemma \ref{Lem:BBehavior}. This concludes the proof.
\end{proof}

\section{Maximal solutions on general domains}\label{section:general-domains}

We would now like to construct a candidate maximal solution to \eqref{Eq:X1} for an arbitrary domain $\Omega$ in $\RR^n$. By performing an inversion about a sphere we may assume without loss of generality that $\partial\Omega$ is compact. Let $\Omega_1 \Subset \Omega_2 \Subset \ldots $ be an increasing sequence of bounded subdomains of $\Omega$ with smooth boundaries $\partial\Omega_j$ such that $\Omega = \cup \Omega_j$. By Theorem \ref{Thm:SmoothDom}, \eqref{Eq:X1}-\eqref{Eq:X1BC} has a unique solution $u_j$ on $\Omega_j$.

By the comparison principle, $u_j$ is non-increasing. Furthermore, by Lemmas \ref{Lem:UpBnd}, \ref{Lem:LowBndEZ} and the Lipschitz regularity result Theorem \ref{thm:Lip}, if $\RR^n \setminus \bar \Omega$ is non-empty, then $\ln u_j$ are locally uniformly bounded in $C^{0,1}_{loc}$. (See also Corollary \ref{Cor:21IV17-C1} below.)

We define (cf. \cite{LoewnerNirenberg}):

\begin{Def}
Set $u_\Omega = \lim_{j \rightarrow \infty} u_j \ge 0$.
\end{Def}

It is clear that $u_\Omega$ is well defined, independent of the choice of $\Omega_j$. In addition, if $\Omega \subset \tilde\Omega$, then $u_\Omega \geq u_{\tilde\Omega}$ in $\Omega$. If $u_\Omega > 0$ (which is the case if e.g. $\RR^n \setminus \bar\Omega \neq\emptyset$; see Lemma \ref{Lem:LowBndEZ}), it is the maximal positive solution of \eqref{Eq:X1}.  Furthermore, if $\Omega$ is bounded and $\partial\Omega$ is a hypersurface, then $u_\Omega$ coincides with the solution given by Theorem \ref{Thm:SmoothDom}. 

We have the following dichotomy:

\begin{lem}\label{Lem:16IV18-Dic}
Let $\Omega$ be a domain of $\RR^n$, $n \geq 3$, and $(f,\Gamma)$ satisfy \eqref{02}-\eqref{f-homo}. Then either $u_\Omega > 0$ in $\Omega$ or $u_\Omega \equiv 0$ in $\Omega$.
\end{lem}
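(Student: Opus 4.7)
The strategy is a connectedness argument: I will show that both $V := \{x \in \Omega : u_\Omega(x) > 0\}$ and $Z := \{x \in \Omega : u_\Omega(x) = 0\}$ are open in $\Omega$, and then invoke the connectedness of $\Omega$ to conclude that one of them must be empty.

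The key input is a scale-invariant local Harnack-type inequality for positive viscosity solutions of $f(\lambda(-A^u)) = 1$: for any ball $B(x_0, 2r) \Subset \Omega$, there exists a constant $C = C(n, f, \Gamma, r) > 0$ such that every continuous positive viscosity solution $u$ of $f(\lambda(-A^u)) = 1$ on $B(x_0, 2r)$ satisfies
\[
\sup_{B(x_0, r/2)} u \;\leq\; C \inf_{B(x_0, r/2)} u.
\]
The crucial feature is that $C$ does not depend on any positive lower bound for $u$. To establish it one combines the absolute upper bound of Lemma~\ref{Lem:UpBnd} with the Kelvin-transform/moving-sphere argument that underlies the proof of Theorem~\ref{thm:Lip}: one shows that the critical radius $\bar\lambda(x)$ beyond which the Kelvin inequality $u_{x,\lambda} \leq u$ fails is bounded below by a positive constant depending only on the upper bound, and then derives the Harnack inequality in the standard way.

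Given this Harnack inequality, the rest is easy. Fix $x_0 \in \Omega$ and pick $r > 0$ so that $\overline{B(x_0, 2r)} \subset \Omega$. For $j$ sufficiently large, $B(x_0, 2r) \subset \Omega_j$ and Lemma~\ref{Lem:UpBnd} yields the uniform bound $u_j \leq \alpha r^{-\frac{n-2}{2}}$ on $B(x_0, r)$, so the Harnack inequality applies to $u_j$ with a constant $C$ independent of $j$. If $u_\Omega(x_0) = c > 0$, then $u_j(x_0) \geq c$ gives $\inf_{B(x_0, r/2)} u_j \geq c/C$, and letting $j \to \infty$ yields $u_\Omega \geq c/C > 0$ on $B(x_0, r/2)$; hence $V$ is open. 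Conversely, if $u_\Omega(x_0) = 0$, then $u_j(x_0) \to 0$ forces $\sup_{B(x_0, r/2)} u_j \leq C\,u_j(x_0) \to 0$, so $u_\Omega \equiv 0$ on $B(x_0, r/2)$ and $Z$ is open. Since $\Omega$ is connected and $\Omega = V \sqcup Z$ is a partition into disjoint open sets, one of them is empty, as claimed.

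The main obstacle is to justify the Harnack inequality with a constant independent of $\inf u$. A direct application of Theorem~\ref{thm:Lip} does not suffice, because the Lipschitz constant it furnishes for $\log u$ depends on the ratio $\sup u / \inf u$ and therefore degenerates as $\inf u \to 0$. The resolution is to exploit the canonical upper bound $u \leq \alpha\,\mathrm{dist}(\cdot, \partial\Omega)^{-\frac{n-2}{2}}$ within the moving-sphere framework: argue by contradiction that if the critical Kelvin radius $\bar\lambda(x_0)$ were to become arbitrarily small, then letting $\lambda \nearrow \bar\lambda(x_0)$ in $u_{x_0,\lambda} \leq u$ would produce a contact configuration incompatible with the canonical-solution comparison from Lemma~\ref{Lem:UpBnd}, thereby forcing a uniform lower bound on $\bar\lambda(x_0)$ and so the desired Harnack inequality.
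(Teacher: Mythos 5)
Your approach is genuinely different from the paper's, and it rests on a lemma that the paper does not supply and that your sketch does not convincingly establish.

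The paper's proof is a one-shot barrier argument and has no need for openness of the zero set. It observes that if $u_\Omega > 2c$ on some ball $B(x_0, r_0) \subset \Omega$, then $u_j > c$ on that ball for all large $j$, and then compares $u_j$ against the explicit canonical exterior solution $u_{r_1,x_0}^{(out)}$ (chosen so that it equals $c$ on $\partial B(x_0,r_0)$) on $\Omega_j \setminus B(x_0,r_0)$. Passing to the limit gives a strictly positive lower bound for $u_\Omega$ throughout $\Omega$. This is a direct proof that $\{u_\Omega > 0\} \neq \emptyset$ implies $\{u_\Omega > 0\} = \Omega$, which makes the connectedness framework and the openness of $Z = \{u_\Omega = 0\}$ entirely superfluous. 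Your argument, by contrast, reduces the dichotomy to a local Harnack inequality with a constant \emph{independent of} $\inf u$.

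That Harnack inequality is where the gap lies. What the paper actually proves (Theorem~\ref{thm:Lip}, via the Kelvin transform) is a Lipschitz estimate for $\log u$ whose constant depends on the ratio $\sup u / \inf u$, precisely the quantity you need to control. You acknowledge this, but the fix you propose --- that a vanishing critical Kelvin radius $\bar\lambda(x_0)$ would ``produce a contact configuration incompatible with the canonical-solution comparison from Lemma~\ref{Lem:UpBnd}'' --- does not identify any actual contradiction. A touching point of the Kelvin transform $u_{x_0,\lambda}$ with $u$ at a small radius is not, by itself, in tension with the upper bound $u \leq \alpha\,\mathrm{dist}(\cdot,\partial\Omega)^{-(n-2)/2}$; the standard way to exploit a touching point is through Hopf's lemma or a strong maximum principle, not the Keller--Osserman bound. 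An inf-independent Harnack of the form you want is plausibly true (one expects it from a compactness argument combined with a strong maximum principle: a sequence of solutions with $\inf \to 0$ but $\sup$ bounded away from zero would limit to a nonnegative solution with an interior zero), but that argument requires uniform local $C^{1,\alpha}$ compactness and a strong maximum principle for the fully nonlinear degenerate operator, neither of which is established in the paper. As written, this step is asserted rather than proved, and it is the load-bearing step of the entire proposal. I'd recommend replacing the Harnack-based argument for openness of $V$ with the paper's barrier comparison (which you already have all the tools for) and dropping the attempt to show $Z$ is open, since it is not needed.
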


\begin{proof} Suppose that $u_\Omega \not\equiv 0$ in $\Omega$. Then there is a ball $B(x_0,r_0) \subset \Omega$ and a constant $c > 0$ such that $u_\Omega > 2c$ in $B(x_0,r_0)$.

Select an exhaustion $\Omega_1 \Subset \Omega_2 \Subset \ldots $ of $\Omega$ and let $u_j$ denote the corresponding sequence as above. As $u_j$ is non-increasing and converges to $u_\Omega$, we have $u_j > c$ in $B(x_0, r_0)$ for all large $j$. Now, select $0 < r_1 < r_0$ such that
\[
u_{r_1, x_0}^{(out)} = \alpha \Big(\frac{r_1}{r_0^2 - r_1^2}\Big)^{\frac{n-2}{2}} = c \text{ on } \partial B(x_0,r_0).
\]
Then the comparison principle implies that 
\[
u_j \geq u_{r_1, x_0}^{(out)} \text{ in } \Omega_j \setminus B(x_0,r_0) \text{ for all large $j$},
\]
from which the conclusion follows.
\end{proof}

\begin{cor}\label{Cor:21IV17-C1}
Let $\Omega$ be a domain of $\RR^n$, $n \geq 3$, and $(f,\Gamma)$ satisfy \eqref{02}-\eqref{f-homo}. Then $u_\Omega$ belongs to $C^{0,1}_{loc}(\Omega)$.
\end{cor}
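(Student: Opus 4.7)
The plan is to establish uniform Lipschitz estimates on the approximating sequence $\{u_j\}$ over compact subsets of $\Omega$ via Theorem \ref{thm:Lip}, and then to pass to the limit. The main work is to produce, on each given $\Omega' \Subset \Omega$, uniform upper and positive lower bounds on $u_j$ valid for all sufficiently large $j$.

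First, by Lemma \ref{Lem:16IV18-Dic}, either $u_\Omega \equiv 0$ (in which case the conclusion is trivial) or $u_\Omega > 0$ in $\Omega$; I focus on the latter. Fix $\Omega' \Subset \Omega'' \Subset \Omega$ and choose $j_0$ so that $\Omega'' \Subset \Omega_{j_0}$. For $j \geq j_0$, the inclusion $\Omega_j \supset \Omega_{j_0}$ gives $\textrm{dist}(x, \partial\Omega_j) \geq \textrm{dist}(\Omega'', \partial\Omega_{j_0}) > 0$ for every $x \in \Omega''$, so Lemma \ref{Lem:UpBnd} yields $u_j \leq b := \alpha \,\textrm{dist}(\Omega'', \partial\Omega_{j_0})^{-(n-2)/2}$ on $\Omega''$.

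For the lower bound, I would re-use the construction from the proof of Lemma \ref{Lem:16IV18-Dic}: pick a ball $B(x_0, r_0) \Subset \Omega$ and a constant $c > 0$ with $u_j > c$ on $B(x_0, r_0)$ for all large $j$. Choose $r_1 \in (0, r_0)$ so that the exterior canonical solution satisfies $u_{r_1, x_0}^{(out)} \equiv c$ on $\partial B(x_0, r_0)$. Since $u_j$ blows up on $\partial \Omega_j$ and exceeds $c$ on $\partial B(x_0, r_0)$, Proposition \ref{Prop:CP} applied on $\Omega_j \setminus \bar B(x_0, r_0)$ gives $u_j \geq u_{r_1, x_0}^{(out)}$ there. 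As $\Omega''$ is bounded and $u_{r_1, x_0}^{(out)}$ is strictly positive and continuous on $\RR^n \setminus \bar B(x_0, r_1)$, the minimum of $u_{r_1, x_0}^{(out)}$ on $\overline{\Omega'' \setminus B(x_0, r_0)}$ is a positive constant; combining this with $u_j \geq c$ on $B(x_0, r_0)$ produces $a > 0$ with $u_j \geq a$ on $\Omega''$ uniformly in large $j$.

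With $0 < a \leq u_j \leq b$ on $\Omega''$ for all large $j$, Theorem \ref{thm:Lip} supplies $|\nabla u_j| \leq C(b/a, n, \Omega', \Omega'')$ on $\Omega'$, independent of $j$. Letting $j \to \infty$, $u_\Omega$ inherits the same Lipschitz constant on $\Omega'$, and arbitrariness of $\Omega'$ yields $u_\Omega \in C^{0,1}_{loc}(\Omega)$. I expect the uniform positive lower bound to be the main obstacle: the naive inequality $u_j \geq u_\Omega$ alone does not suffice because $u_\Omega$, being a monotone decreasing limit of continuous functions, is a priori only upper semicontinuous and need not be bounded below by a positive constant on compact sets. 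The comparison with an exterior canonical solution, made possible by the ball supplied by the dichotomy argument, is precisely what bypasses this difficulty.
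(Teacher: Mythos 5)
Your proof is correct and follows the same overall route as the paper: invoke the dichotomy of Lemma \ref{Lem:16IV18-Dic}, obtain uniform upper bounds from Lemma \ref{Lem:UpBnd}, uniform positive lower bounds on compact sets, and then uniform Lipschitz bounds from Theorem \ref{thm:Lip}, finally passing to the limit. Where you go beyond the paper is in the lower-bound step: the paper simply writes ``$u_j \geq u_\Omega > 0$'' and then cites Theorem \ref{thm:Lip}, but, as you correctly observe, $u_\Omega$ is a priori only upper semicontinuous (being a decreasing limit of continuous functions), so pointwise positivity does not immediately yield a positive infimum on compact sets. Your comparison of $u_j$ against $u_{r_1,x_0}^{(out)}$ on $\Omega_j \setminus \bar B(x_0,r_0)$ --- exactly the device already used in the proof of Lemma \ref{Lem:16IV18-Dic} --- supplies the needed uniform lower bound cleanly, and in that sense your write-up makes explicit what the paper's terse proof leaves implicit.
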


\begin{proof}
If $u_\Omega$ is identically zero, there is nothing to prove. Otherwise, we have $u_\Omega > 0$ by Lemma \ref{Lem:16IV18-Dic} above. 

Now recall the approximate solutions $u_j$ defines on subdomains $\Omega_j$. By construction, $u_j \geq u_\Omega > 0$ in $\Omega_j$. Thus, by Lemma \ref{Lem:UpBnd} and the Lipschitz regularity result Theorem \ref{thm:Lip}, the sequence $\ln u_j$ is locally uniformly bounded in $C^{0,1}_{loc}(\Omega)$, from which the conclusion is drawn.
\end{proof}

The rest of this section discusses the validity of \eqref{Eq:X1BC}. Recall that a compact subset $\Sigma$ of $\partial\Omega$ is called regular if $u_\Omega(x) \rightarrow +\infty$ as $x \rightarrow \Sigma$.

\subsection{A criterion for regularity}

\begin{proof}[Proof of Theorem \ref{Thm:05IV18-T1}]
 The proof is very similar to that given in \cite{LoewnerNirenberg}. It suffices to show that if $\psi$ exists, then $u(x) = u_\Omega(x) \rightarrow +\infty$ as $x \rightarrow \Sigma$.

We may assume that $\bar U$ is compact, $U \cap (\partial\Omega \setminus \Sigma)$ is empty and $\psi \in C^2(\bar U \cap \Omega)$. For large $M$, let
\[
U_M = \{x \in U \cap \Omega: \psi(x) \geq M\} \subset U.
\]
Let
\[
a_M = \inf_{\partial U_M} u > 0.
\]

Note that, for large $j$, $\partial(U_M \cap \Omega_j) = (\partial U_M \cap \Omega) \cup (\partial\Omega_j \cap U)$, where $\psi = M$ on the former set and $u_j = \infty$ on the latter set. It follows that
$
(1 + a_M^{-1}\,M) u_j \geq \psi \text{ on } \partial (U_M \cap \Omega_j)$. As $(1 + a_M^{-1}\,M) u_j$ is a super-solution to \eqref{Eq:X1}, the comparison principle then implies that
\[
(1 + a_M^{-1}\,M) u_j \geq \psi \text{ on } U_M \cap \Omega_j.
\]
Sending $j \rightarrow \infty$, we deduce that $u(x) \rightarrow +\infty$ as $x \rightarrow \Sigma$. 
\end{proof}

\begin{proof}[Proof of Corollary \ref{cor:irregular}]
Suppose that $\Sigma$ is regular and let $\psi$ be as in Theorem \ref{Thm:05IV18-T1}. By \eqref{Eq:05IV18-E1}, we have
\[
\Delta \psi - \psi^{\frac{n+2}{n-2}} \geq \frac{n-2}{2} c_0 > 0.
\]
By \cite[Theorem 5]{LoewnerNirenberg}, this implies that $\Sigma$ is regular for the Loewner-Nirenberg problem. By \cite[Theorem 7]{LoewnerNirenberg}, this implies further $\dim\Sigma \geq \frac{n-2}{2}$.
\end{proof}

\subsection{Proof of Theorem \ref{thm:smooth-Sigma}}

\begin{proof}[Proof of Theorem \ref{thm:smooth-Sigma}]
We first show statement \emph{i.}, by constructing a suitable test function in Theorem \ref{Thm:05IV18-T1}.
Let $\rho$ denote the distance function to $\Sigma$ and
\[
\psi = c\,\rho^{-\frac{n-2}{2}},
\]
where $c$ is a constant which will be fixed later. We have, as $|\nabla \rho| = 1$,
\[
A^\psi = c^{-\frac{4}{n-2}}\Big[\rho \nabla^2 \rho - \frac{1}{2} I\Big].
\]

There exists some $\delta > 0$ such that, for $\rho(x) < \delta$, there exists a unique point $\pi(x)$ on $\Sigma$ such that $\rho(x) = |x - \pi(x)|$ (and that $x - \pi(x)$ is perpendicular to $\Sigma$ at $\pi(x)$). Furthermore, the map $x \mapsto \pi(x)$ is smooth.

Fix some point $x_0 \in \Omega$ with $\rho(x_0) = \rho_0 < \delta$ and Assume, after a rotation of coordinate system, that $\pi(x_0) = 0$, $x_0 = (0, 0, \ldots, 0, \rho_0)$ and
$\Sigma$ is represented locally by
\[
\Sigma = \{(x', x'') \in  \RR^{n-k} \times \RR^{k}: x'' = f(x')\},
\]
where $f = (f_1, \ldots, f_k)$ maps a neighborhood of the origin in $\RR^{n-k}$ into $\RR^k$ with $f(0) = 0$ and $\nabla f(0) = 0$.

Let $e_1, \ldots, e_n$ be the standard basis of $\RR^n$.

Observe that for all $x = (0,x'') \in \{0\} \times \RR^k$ with $|x''| < \delta$, $\pi(x) = 0$. Also, we have $\pi(x', f(x')) = x'$. It follows that $\partial_i \pi(0) = e_i$ if $1 \leq i \leq n-k$ and $\partial_i \pi(0) = 0$ if $n - k + 1 \leq i \leq n$, i.e.
\[
\nabla \pi(0)  = \left[\begin{array}{cc}
	I_{(n-k) \times (n-k)} & 0_{(n-k) \times k} \\
	0_{k \times (n-k)} & 0_{k \times k}
\end{array}\right].
\]
Hence,
\begin{equation}
\pi(x) = x' + O(|x|^2)
	\label{Eq:DPiX}
\end{equation}
Note that $x - \pi(x)$ belongs to the span of
\[
\Big\{e_{n-k + i} - \sum_{l = 1}^{n-k} \partial_{l} f^i(\pi(x)')\,e_l: i = 1,\ldots, k\Big\},
\]
and so
\begin{align*}
x - \pi(x)
	&= \sum_{i=1}^k (x- \pi(x))_{n-k+i}\Big(e_{n-k + i} - \sum_{l = 1}^{n-k} \partial_{l} f^i(\pi(x))\,e_l\Big)\\
	&= (-(x'' - \pi(x)'') \nabla f(\pi(x)'),x'' - \pi(x)'').
\end{align*}
It follows that, in view of \eqref{Eq:DPiX}.
\[
\rho(x)^2 = |x'' - \pi(x)''|^2 + |(x'' - \pi(x)'') \nabla f(\pi(x)')|^2 = |x''|^2 + O(|x|^3).
\]
This implies that
\[
\rho(x_0) \nabla^2 \rho(x_0) = \left[\begin{array}{ccc}
	0_{(n-k) \times (n-k)} & 0_{(n-k) \times (k-1)} & 0_{(n-k) \times 1}\\
	0_{(k-1) \times (n-k)} & I_{(k-1) \times (k-1)} & 0_{(k-1) \times 1}\\
	0_{1 \times (n-k)} 	& 0_{1 \times (k-1)} & 0
\end{array}\right] + O(\rho_0).
\]
Hence, as $f$ is continuous, by shrinking $\delta$ if necessary, we have
\[
f\Big(\lambda\Big(\rho\nabla^2\rho - \frac{1}{2}I\Big)\Big) > \frac{1}{C} > 0 \text{ provided } \rho(x) < \delta.
\]
Hence using the fact that $f$ is positively homogeneous, we can find some $c$ such that $f(\lambda(-A^\psi)) \geq 1$ as desired.\\

Next we show statement \emph{ii.} For positive constants $\alpha$, $\beta$, $c$ and $d$, consider now the function
\[
\psi = \psi_{c,d} = (c\rho^{-\alpha} + d)^{\beta}.
\]
We have, as $|\nabla \rho| = 1$,
\begin{align*}
A^\psi
	&=  \frac{2\alpha\beta}{(n-2)\rho^2} \psi^{-\frac{4}{n-2}}\,\frac{c\rho^{-\alpha}}{c\rho^{-\alpha} + d} \Big\{\rho\,\nabla^2 \rho\\
		&\qquad  + \Big[-\alpha - 1 + \alpha(\frac{2}{n-2}\beta + 1) \frac{c\rho^{-\alpha}}{c\rho^{-\alpha} + d}\Big] \nabla \rho \otimes \nabla \rho\\
		&\qquad - \frac{\alpha\beta}{n-2} \frac{c\rho^{-\alpha}}{c\rho^{-\alpha} + d}\,I\Big\}.
\end{align*}
The previous calculation shows that, in an appropriate coordinate system and when $\rho$ is sufficiently small,
\begin{align*}
A^\psi &= \frac{2\alpha\beta}{(n-2)\rho^2} \psi^{-\frac{4}{n-2}}\,\zeta\times \\
	&\qquad \times \left[\begin{array}{ccc}
	-\zeta I_{(n-k) \times (n-k)} & 0_{(n-k) \times (k-1)} & 0_{(n-k) \times 1}\\
	0_{(k-1) \times (n-k)} & (1 - \zeta + O(\rho))I_{(k-1) \times (k-1)} & 0_{(k-1) \times 1}\\
	0_{1 \times (n-k)} 	& 0_{1 \times (k-1)} &  - \alpha - 1 + \zeta + \frac{n-2}{\beta}\zeta
\end{array}\right],
\end{align*}
where $\zeta = \frac{\alpha\beta}{n-2}\frac{c\rho^{-\alpha}}{c\rho^{-\alpha} + d}$.

Writing
\begin{align*}
&\frac{1}{1-\zeta}(\underbrace{\zeta, \ldots, \zeta}_{n - k \text{ entries}}, \underbrace{-(1-\zeta), \ldots, - (1-\zeta)}_{k-1 \text{ entries}}, - \alpha - 1 + \zeta + \frac{n-2}{\beta}\zeta)\\
	&\qquad = (\underbrace{1, \ldots, 1}_{n - k  \text{ entries}}, \underbrace{-1, \ldots, - 1}_{k-1 \text{ entries}}, 1)\\
		&\qquad\qquad + \Big(\underbrace{\frac{2\zeta - 1}{1 - \zeta}, \ldots, \frac{2\zeta - 1}{1 - \zeta}}_{n - k \text{ entries}}, \underbrace{0, \ldots, 0}_{k-1 \text{ entries}}, \frac{\alpha - \frac{n-2}{\beta}\zeta}{1-\zeta}\Big),
\end{align*}
we see that there exists some $\epsilon > 0$ such that the above vector does not belong to $\Gamma$ for all $\zeta \in [0,\frac{1}{2} + \epsilon]$ and $\alpha \in (0,\epsilon)$. Note that we have used the assumption that $k > k(\Gamma)$. 

We now fix some $\alpha \in (0,\epsilon)$ and some $\beta > 0$ such that $\frac{\alpha\beta}{n-2} \in (\frac{1}{2},\frac{1}{2} + \epsilon)$. By the above, there is some $\delta > 0$ such that
\[
\lambda(-A^{\psi_{c,d}}) \in \RR^n\setminus \bar\Gamma \text{ in } \{0 < \rho < \delta\}
\]
for all positive constants $c$ and $d$. Note that this implies in particular that $\psi_{c,d}$ is a super-solution of \eqref{Eq:X1}.

Now consider the function $u = u_\Omega$. Take $d$ sufficiently large such that $\psi_{c,d} \geq d^\beta > u$ on $\{\rho = \delta\}$. Observe that $\psi_{c,d} \geq c\rho^{-\alpha\beta}$, while, by Lemma \ref{Lem:UpBnd} and our assumption that $\frac{\alpha\beta}{n-2} > \frac{1}{2}$, $u = O(\rho^{-\frac{n-2}{2}}) = o(\rho^{-\alpha\beta})$ as $\rho \rightarrow 0$. Hence, there is some $\delta' = \delta'\in (0,\delta)$ such that $\psi_{c,d} > u$ in $\{0 < \rho < \delta'\}$. By the comparison principle,  we hence have $\psi_{c,d} > u$ in $\{\delta' < \rho < \delta\} \cap \{u > 0\}$ and so
\[
\psi_{c,d} \geq u \text{ in } \{0 < \rho < \delta\}.
\]
Since this is true for all $c$, we deduce that
\[
u \leq \lim_{c \rightarrow 0} \psi_{c,d} = d^\beta \text{ in } \{0 < \rho < \delta\}.
\]
In particular, $u$ is bounded near $\Sigma$.
\end{proof}

\begin{proof}[Proof of Corollary \ref{cor-last}]
It suffices to consider (a), as (b) is a consequence of (a) via an inversion. Furthermore, as $u_{B_r(0) \setminus \{0\}}$ is the maximal solution in $B_r(0) \setminus \{0\}$, it suffices to establish the result for $u = u_{B_r(0) \setminus \{0\}}$.

We have
\[
(1, \underbrace{-1, \ldots -1}_{n - 1 \text{ entries}}) \in \RR^n \setminus \bar\Gamma_1 \subset \RR^n \setminus \bar\Gamma.
\]
The conclusion follows from Theorem \ref{thm:smooth-Sigma} \emph{ii.} with $\Sigma = \{0\}$.
\end{proof}


\noindent\textbf{Acknowledgements.} M.d.M. Gonz\'alez is supported by Spanish government grants MTM2014-52402-C3-1-P and MTM2017-85757-P, and the BBVA foundation grant for  Researchers and Cultural Creators, 2016. Y.Y. Li is partially supported by NSF grant DMS-1501004. The first two named authors are grateful to the Fields Institute in Toronto for hospitality during part of work on this paper.

\newcommand{\noopsort}[1]{}

\end{document}